\documentclass[11pt]{article}

\usepackage{amsfonts}
\usepackage{amsmath}
\usepackage{amssymb}
\usepackage{amsfonts}
\usepackage{amsmath}

\usepackage{graphicx}
\usepackage{ifthen}
\usepackage{comment}
\usepackage{xcolor}
\usepackage{braket}
\usepackage{enumitem}
\usepackage{lipsum,multicol}

\usepackage[T1]{fontenc}
\usepackage[english]{babel}
\usepackage{authblk}
\setlist[itemize]{noitemsep, topsep=0pt}
\setlist[enumerate]{noitemsep, topsep=0pt}
\setlist[itemize]{leftmargin=*}
\setlist[enumerate]{leftmargin=*}
\providecommand{\U}[1]{\protect\rule{.1in}{.1in}}

\providecommand{\norm}[1]{\left\lVert#1\right\rVert}

\providecommand{\pr}[1]{\left(#1\right)}

\providecommand{\set}[1]{\left\lbrace#1\right\rbrace}

\newcommand{\normi}[3]{\norm{#1}_{
    \ifthenelse{\equal{#2}{1}}{H_0^1\pr{\mathcal{O}_{#3}}}{%
    \ifthenelse{\equal{#2}{-1}}{H^{-1}\pr{\mathcal{O}_{#3}}}{}}}}

\makeatletter
\newcommand{\subjclass}[2][2020]{%
  \let\@oldtitle\@title%
  \gdef\@title{\@oldtitle\footnotetext{\textbf{#1 \emph{Mathematics subject classification.}} #2}}%
}
\newcommand{\keywords}[1]{%
  \let\@@oldtitle\@title%
  \gdef\@title{\@@oldtitle\footnotetext{\textbf{\emph{Key words.}} #1.}}%
}
\makeatother

\newtheorem{theorem}{Theorem}

\newtheorem{definition}[theorem]{Definition}

\newtheorem{lemma}[theorem]{Lemma}

\newtheorem{remark}[theorem]{Remark}

\newenvironment{proof}[1][Proof]{\noindent\textbf{#1.} }{\ \rule{0.5em}{0.5em}}

\begin{document}

\title{Backward stochastic porous media equation with polynomial growth}

\author{{
Ioana Ciotir \textsuperscript{a}\thanks{I. C. email: ioana.ciotir@insa-rouen.fr}, Dan Goreac\textsuperscript{b,c,d}\thanks{D. Goreac email: dan.goreac@act.ulaval.ca}, Juan Li \textsuperscript{c,e}\thanks{J. L. email: juanli@sdu.edu.cn}, Yufan Peng \textsuperscript{c}\thanks{Y. P. email: yufanpeng@mail.sdu.edu.cn} ,
Eduard Rotenstein\textsuperscript{f}\thanks{E. R. email: eduard.rotenstein@uaic.ro}}}
\affil{\footnotesize \textsuperscript{a} {Normandie University, INSA de Rouen Normandie, LMI (EA 3226 – FR CNRS 3335), 76000 Rouen, France;}\\
\textsuperscript{b} {\'{E}cole d'Actuariat, Université Laval, Québec, G1V 0A6, Canada;} \\
\textsuperscript{c} {School of Mathematics and Statistics, Shandong University, Weihai, Weihai 264209, PR China;}\\
\textsuperscript{d} {LAMA, Univ Gustave Eiffel, UPEM, Univ Paris Est Creteil, CNRS, F-77447 Marne-la-vall\'{e}e, France;}\\
\textsuperscript{e}{Research Center for Mathematics and Interdisciplinary Sciences, Shandong University, Qingdao 266237, PR China};\\
\textsuperscript{f} {Faculty of Mathematics, "Alexandru Ioan Cuza" University of Iasi, 700506, Romania.}
}
\date{October 15, 2026}
\maketitle
\begin{abstract} We investigate the well-posedness of a backward stochastic porous media equation whose nonlinear term is a polynomial operator corresponding to the slow diffusion regime. The main challenge stems from the structure of this nonlinear operator, which causes the operator to lose maximal monotonicity and prevents the use of standard weak-strong convergence techniques to identify the limit. Moreover, since the Laplace operator makes it difficult to separate the operators and discuss them individually, we attempt to obtain the of limit by investigating the global properties of the operator.
\end{abstract}
\noindent \textbf{MSCA 2020:} 60H15, 35R60, 35K65, 60H30.\\
\noindent \textbf{Keywords:} Backward stochastic partial differential equations, porous media equation, polynomial growth nonlinearity, slow diffusion, variational solutions, Yosida approximation, maximal monotone operators.

\section{Introduction}

In the present work we are interested in the study of a backward stochastic (abbreviated to BSDE)
porous media equation of the form%
\begin{equation}
\left\{
\begin{array}{ll}
dY_{t}=-\left( \Delta \Psi \left( Y_{t}\right) +h\left( Y_{t},Z_{t}\right)
\right) dt+Z_{t}dW_{t}, & \quad \left( 0,T\right) \times \mathcal{O}\times
\Omega , \\
\Psi \left( Y_{t}\right) =0, & \quad \left( 0,T\right) \times \partial
\mathcal{O}\times \Omega , \\
Y_{T}=\xi , & \quad \mathcal{O},%
\end{array}%
\right.  \label{ecu1}
\end{equation}%
where $\mathcal{O}$ is a bounded open domain in $\mathbb{R}^{d}$ with smooth
boundary and $\left( \Omega ,\mathcal{F},\mathcal{F}_{t},\mathbb{P}\right) $
is a complete filtrated probability space, on which a cylindrical Wiener
process $\left\{ W_{t}\right\} _{t\geq 0}$ is defined on the separable
Hilbert space $H^{-1}\left( \mathcal{O}\right) $. As usually, we denoted by $%
\Delta $ the standard Laplace operator with homogeneous Dirichlet boundary conditions
and by $\Psi :\mathbb{R}\rightarrow \mathbb{R}$ a maximal monotone operator
with polynomial growth. The final data $\xi $ is assumed to be a $\mathcal{F}%
_{T}-$measurable random variables belonging to $L^{\infty }\left( \Omega ;L^{2}\left(
\mathcal{O}\right) \right) .$

Stochastic porous media equations extend classical nonlinear diffusion models by incorporating random perturbations, modeling phenomena such as fluid flow through heterogeneous porous materials under uncertainty. As a highly nonlinear partial differential equation, it also has an intrinsic mathematical interest which is not well posed in standard spaces of regular functions. These equations often involve maximal monotone operators and variational methods for existence and uniqueness, \cite{BDPR} systematically summarizes the relevant theory and provides a detailed exposition of the existence of solutions to the equation, their asymptotic behavior, and the ergodic properties of the associated transition semigroup. For some critical cases associated to applications including groundwater flow infiltration \cite{BII}, gravity driven infiltration further considered in \cite{eu-satur}, phases transition phenomena with convection terms induced by temperature gradients \cite{franco1}, problems with mushy regions and turbulence \cite{franco2}, and the asymptotic behaviors that are further studied in \cite{reika}. The growth order of $\Psi$ corresponds to problems with physically important problems, the porous medium equation describes phenomena ranging from slow diffusion with super-unit order, to fast diffusion with sub-unit order and  ultra-fast diffusion with negative order, even some models in complex media, e.g. \cite{shro}, \cite{robin}.

BSPDEs with porous media operators can model nonlinear diffusion with random fluctuations in heterogeneous materials, allowing the reconstruction of flow dynamics and source terms from later observations of the system. This capability is crucial in physics for inverse problems, where the aim is to infer past states or unknown inputs based on current measurements, particularly in complex porous media like groundwater or filtration systems.

Backward stochastic differential equations (BSDEs) within the finite-dimensional and ordinary setting, were first introduced in 1973 by Bismut as a tool for characterizing adjoint processes in the stochastic version of the Pontryagin maximum principle~\cite{Bismut:73}. The modern theory of nonlinear BSDEs, still referring to the finite-dimensional framework, was established by Pardoux and Peng~\cite{Pardoux/Peng:90}, who provided existence and uniqueness results for a broad class of equations.

Subsequently, Pardoux and Peng~\cite{Pardoux/Peng:92} initiated a stochastic approach to the existence theory for solutions of deterministic partial differential equations via BSDEs interpretations. Since then, BSDEs have undergone significant generalization, with much work devoted to approximation schemes and applications. BSDEs now serve as an essential framework in stochastic control, mathematical finance (notably in pricing derivatives), and the modeling of physical phenomena, as both linear and nonlinear BSDEs can describe pricing problems with or without portfolio constraints.

Pardoux and R\u{a}\c{s}canu~\cite{Pardoux/Rascanu:98} provided, via probabilistic representations, a tool adapted to the existence of viscosity solutions for multivalued parabolic and elliptic PDEs involving subdifferential operators. Their subsequent work~\cite{Pardoux/Rascanu:99} extended these results to equations in Hilbert spaces. Further extensions focussing on developing penalization techniques and convex analysis based on the Fitzpatrick function can be found in~\cite{Rascanu/Rotenstein:11}.

The variational approach to SPDEs, built on monotonicity and coercivity, was established by Krylov, Rozovskii~\cite{Krylov/Rozovskii:79} and Pardoux, Peng~\cite{Pardoux/Peng:92}, and further extended by Liu and R\"{o}ckner~\cite{Liu/Rockner:2010,Liu/Rockner:2013}.

More recently, backward stochastic partial differential equations (BSPDEs) have drawn substantial interest. Works like Tang, Wei~\cite{Tang/Wei:16}, Qiu, Tang, You~\cite{Qiu/Tang/You:2012}, and Sundar, Yin~\cite{Sundar/Yin:2009} examine BSPDEs and stochastic Navier-Stokes systems.

Finally, and without any aim of exhaustivity, we mention \cite{Liu}. The cited reference provides the
existence and uniqueness of strong solutions for a large class of BSPDEs,
by assuming that the coefficients satisfy a specific Lyapunov-type condition, instead of
the more classical coercivity one. The authors employ the generalized variational framework to substitute the standard monotonicity condition with a local monotonicity condition, thereby allowing applications to a wider range of quasilinear and semilinear BSPDE models.

Let us equally mention a few other directions of extension, without a particular link to our present study. Finite-dimensional BSDEs driven by discontinuous processes include fundamental contributions by Barles, Buckdahn, Pardoux~\cite{Barles/Buckdahn/Pardoux:97}, Tang, Li~\cite{Tang/Li:94}, Xia~\cite{Xia:00}, Carbone, Ferrario, Santacroce~\cite{Carbone/Ferrario/Santacroce:07}, and Cohen, Elliott~\cite{Cohen/Elliott:10}. Extensions to marked point processes and iterative methods for jump-driven equations can be found in works such as Confortola, Fuhrman~\cite{Confortola/Fuhrman:14}, Confortola, Fuhrman, Jacod~\cite{Confortola/Fuhrman/Jacod:16}, and Kharroubi, Lim~\cite{Kharroubi/Lim:14}, among others.

The main contribution of this work is the analysis of \emph{polynomial-growth} nonlinearities in the slow-diffusion regime, in contrast to \cite{Liu}, which concentrates on fast-diffusion equations. Methodologically, we rely on Yosida approximations rather than the Galerkin scheme adopted in \cite{Liu}.

From a technical standpoint, particular care is needed in handling the nonlinear coefficient $h$, which acts both in $H^{-1}$ (see the next sections for details) and in the larger space of the underlying Gelfand triple. In the polynomial-growth setting, identifying the limits of both the differential operator and the nonlinear term $h$ requires more delicate arguments. In particular, since the combined operator $\Delta\Psi + h$ is not maximally monotone, classical identification techniques break down, forcing us to develop new arguments. Through the Yosida approximation, we construct a family of approximating problems whose solutions are strong convergence in $H^{-1}$, while the associated combined operators are weakly convergence in $H^{-1}$. However, since dealing with the combined operator requires convergence results for $\Delta \Psi$ as well as further uniform estimates for the other coefficients, we try a second Yosida approximation for the equation. Then, by introducing a weighting function into the It\^{o}'s formula, we achieve the identification of the limiting nonlinear terms.

\medskip

\paragraph{Structure of the paper.}
The paper first motivates the model and reviews BSDE/BSPDE literature, then sets the functional-analytic framework and structural assumptions on \(\Psi\) and \(h\). Next, it defines the notion of solution and states the main theorem. A Yosida-approximated BSPDE is then solved using Liu's result -- only applying to Lipschitz settings, thus to our approximating equations,-- with a particular focus on uniform estimates for approximations. Finally, the passage to the limit and identification of the nonlinear terms are carried out, while preliminary technical lemmas concerning even further approximations are deferred to an Appendix.



\section{Preliminaries and assumptions}
In order to fix the setting, we recall the classical Gelfand triple employed for porous medium systems, i.e.
\begin{equation*}
L^{2}\left( \mathcal{O}\right) \subset H^{-1}\left( \mathcal{O}\right)
\subset \left( L^{2}\left( \mathcal{O}\right) \right) ^{\ast },
\end{equation*}%
see \cite{PrevotRoeckner2007}. In this context, we assume that the nonlinearity acts as follows
\begin{equation*}
h:L^{2}\left( \mathcal{O}\right) \times L_{2}\left( L^{2}\left( \mathcal{O}%
\right) ;H^{-1}\left( \mathcal{O}\right) \right) \rightarrow \left(
L^{2}\left( \mathcal{O}\right) \right) ^{\ast }.
\end{equation*}%
As usual, we have denoted by $L_{2}\left( L^{2}\left( \mathcal{O}\right)
;H^{-1}\left( \mathcal{O}\right) \right) $ the space of Hilbert-Schmidt
operators from $L^{2}\left( \mathcal{O}\right) $ to $H^{-1}\left( \mathcal{O}%
\right) $.\\
Furthermore, we consider the eigen-functions associated to $\Delta$ (the system being considered orthonormal w.r.t. $H^{-1}(\mathcal{O})$), denoted by $\pr{e_n}_{n\geq 1}$. We consider the finite-dimensional eigen-space
\[H_n:=span \set{e_1,\ldots,e_n}\subset H^{-1}(\mathcal{O}),\ for \ n\geq 1.\]

Throughout the paper, we employ the following set of assumptions.\\

\noindent \textbf{Assumptions}

\begin{itemize}
\item[i)] The function $\Psi:\mathbb{R}\longrightarrow\mathbb{R}$ is assumed to be
\begin{enumerate}
    \item[a)] monotonically increasing,
\item[b)] continuously differentiable, and
\item[c)] such that, for some $m\geq 1$,%
\begin{equation*}
\left\{
\begin{array}{l}
\Psi \left( 0\right) =0,\quad \alpha _{0}<\Psi ^{\prime }\left( r\right)
\leq \alpha _{1}\left\vert r\right\vert ^{m-1}+\alpha _{2}, \\
\Psi(r)r\geq \int_{0}^{r}\Psi \left( s\right) ds\geq \alpha _{3}\left\vert r\right\vert
^{m+1}+\alpha _{4}\left\vert r\right\vert
^{2},
\end{array}%
\right.
\end{equation*}%
for all $r\in \mathbb{R}$, where $\alpha _{0},~\alpha _{3}>0,~\alpha _{1},\ \alpha_2 \geq 0$, $\alpha _{4}\in\mathbb{R}$,
are fixed constants.
\end{enumerate}
\item[ii)] The nonlinearity $h$ is assumed to satisfy

\begin{itemize}
\item[a)] (\textit{hemicontinuity}) For all $y_{1},~y_{2}\in L^{2}\left(
\mathcal{O}\right) $, all $z\in L_{2}\left( L^{2}\left( \mathcal{O}\right)
;H^{-1}\left( \mathcal{O}\right) \right) $ and all $\phi \in L^{2}\left(
\mathcal{O}\right) $ the application%
\begin{equation*}
\mathbb{R}\ni s\mapsto _{\left( L^{2}\left( \mathcal{O}\right) \right) ^{\ast
}}\left\langle h\left( y_{1}+sy_{2},z\right) ,\phi \right\rangle
_{L^{2}\left( \mathcal{O}\right) }
\end{equation*}%
is continuous.

\item[b)] (\textit{monotonicity in the first variable}) {There exists a locally
bounded measurable function $\rho :L^{2}\left( \mathcal{O}\times \Omega\times [0,T]\right)
\rightarrow \mathbb{R}_{+}$ such that}
\begin{equation*}
\begin{aligned}
&_{\left( L^{2}\left( \mathcal{O}\right) \right) ^{\ast }}\left\langle
h\left( y_{1},z_{1}\right) -h\left( y_{2},z_{2}\right)
,y_{1}-y_{2}\right\rangle _{_{L^{2}\left( \mathcal{O}\right) }} \\
\leq&\rho \left( y_{1}\right) \left[\left\Vert y_{1}-y_{2}\right\Vert
_{H^{-1}\left( \mathcal{O}\right) }^{2} +\left\Vert y_{1}-y_{2}\right\Vert _{H^{-1}\left(
\mathcal{O}\right) }\left\Vert z_{1}-z_{2}\right\Vert _{L_{2}\left(
L^{2}\left( \mathcal{O}\right) ;H^{-1}\left( \mathcal{O}\right) \right) }\right],
\end{aligned}
\end{equation*}
for all $y_{1},~y_{2}\in L^{2}\left( \mathcal{O}\right) $, and all $%
z_{1},\ z_{2}\in L_{2}\left( L^{2}\left( \mathcal{O}\right) ;H^{-1}\left(
\mathcal{O}\right) \right) .$

\item[c)] (\textit{one-sided linear growth})
There exist generic constants $K,\ \gamma$ such that the following hold true.
\begin{itemize}
\item[$c_{1})$] (\emph{In $L^{2}\left( \mathcal{O}\right) $}) For every $n\in \mathbb{N}$,
the operator $h$ maps $H_n$ into $L^{2}\left( \mathcal{O%
}\right) $, and, for every $y\in H_{n}$ and $z\in L_{2}\left( L^{2}\left( \mathcal{O}\right)
;H^{-1}\left( \mathcal{O}\right) \right) $
\begin{equation*}
\left\langle h\left( y,z\right) ,y\right\rangle _{L^{2}\left( \mathcal{O}%
\right) }\leq K\left\Vert y\right\Vert _{L^{2}\left( \mathcal{O}\right)
}^{2}+\gamma \left\Vert z\right\Vert _{L_{2}\left( L^{2}\left( \mathcal{O}%
\right) ;L^{2}\left( \mathcal{O}\right) \right) }^{2};
\end{equation*}

\item[$c_{2})$] (\emph{In $H^{-1}\left( \mathcal{O}\right) $})
\begin{equation*}
\left\langle h\left( y,z\right) ,y\right\rangle _{H^{-1}\left( \mathcal{O}%
\right) }\leq K\left\Vert y\right\Vert _{H^{-1}\left( \mathcal{O}\right)
}^{2}+\gamma \left\Vert z\right\Vert _{L_{2}\left( L^{2}\left( \mathcal{O}%
\right) ;H^{-1}\left( \mathcal{O}\right) \right) }^{2},
\end{equation*}
for all $y\in H^{-1}\left( \mathcal{O}\right) $, and all $z\in L_{2}\left(
L^{2}\left( \mathcal{O}\right) ;H^{-1}\left( \mathcal{O}\right) \right) .$
\end{itemize}

\item[d)] (\textit{growth}) There exists a positive constant $K$ such that\footnote{with a function $\rho$ behaving in the same way as the one appearing in the monotonicity assumption b).}%
\begin{itemize}
\item[$d_{1})$]
\begin{equation*}
\left\Vert h\left( y,z\right) \right\Vert _{\left( L^{2}\left( \mathcal{O}%
\right) \right) ^{\ast }}\leq K\left( \rho \left( y\right) +\left\Vert
z\right\Vert _{L_{2}\left( L^{2}\left( \mathcal{O}\right) ;L^{2}\left(
\mathcal{O}\right) \right) }\right) ,
\end{equation*}%
for all $y\in L^{2}\left( \mathcal{O}\right) $, and all $z\in L_{2}\left(
L^{2}\left( \mathcal{O}\right) ;L^{2}\left( \mathcal{O}\right) \right) $;

\item[$d_{2})$]and
 \begin{equation*}
\left\Vert h\left( y,z\right) \right\Vert _{H^{-1}\left( \mathcal{O}\right)
}\leq K\left( \rho \left( y\right) +\left\Vert z\right\Vert _{L_{2}\left(
L^{2}\left( \mathcal{O}\right) ;H^{-1}\left( \mathcal{O}\right) \right)
}\right),
\end{equation*}%
for all $y\in L^{2}\left( \mathcal{O}\right) $ and all $z\in L_{2}\left(
L^{2}\left( \mathcal{O}\right) ;H^{-1}\left( \mathcal{O}\right) \right) $.
\end{itemize}
\end{itemize}
\end{itemize}

\begin{remark}\begin{enumerate}
\item The assertions $c_{1})$ and $d_{1})$ are necessary to apply the existence result from \cite{Liu} in the Gelfand triple $L^{2}\left( \mathcal{O}\right) \subset H^{-1}\left( \mathcal{O}\right)
\subset \left( L^{2}\left( \mathcal{O}\right) \right) ^{\ast }$ for the Lipschitz approximating equation. \item The present work treats the general porous media equation in $H^{-1}\left( \mathcal{O}\right)$; hypotheses $c_{2})$ and $d_{2})$ intervene in most of the estimates with respect to $H^{-1}\left( \mathcal{O}\right)$.\end{enumerate}
\end{remark}

\noindent \textbf{Notations}
\begin{itemize}
\item Considering the complete filtered probability space $\left( \Omega ,\mathcal{F},\mathcal{F}_{t},\mathbb{P}\right) $, we denote by $\mathcal{P}$\ the $\sigma -$algebra of the predictable sets on $\Omega \times \left[ 0,T\right]
$ associated with $\left\{ \mathcal{F}_{t}\right\} _{t\in \left[ 0,T\right]
} $.
\item For some generic Banach space $B$, we let $L_{\mathcal{F}}^{p}\left( \Omega
;L^{r}\left( \left[ 0,T\right] ;B\right) \right) $ with $p,\ r\in \left[
1,\infty \right] $ be the set of all predictable $B$-valued processes in $%
L^{p}\left( \Omega ;L^{r}\left( \left[ 0,T\right] ;B\right) \right) $ and
let $L_{\mathcal{F}}^{p}\left( \Omega \times \left[ 0,T\right] ;B\right) =L^{p}_{\mathcal{F}}\left( \Omega ;L^{p}\left( \left[ 0,T\right] ;B\right) \right) $.
\item We denote by $L_{\mathcal{F}%
_{T}}^{p}\left( \Omega ;B\right)$,\ $p\in \left[ 1,\infty \right] $ the set
of all $\mathcal{F}_{T}-$measurable random variables in $L^{p}\left( \Omega
;B\right) $.\end{itemize}

\section{Definition of the solution and main result}

\begin{definition}\label{Def1}
For a terminal condition $\zeta \in L_{\mathcal{F}_{T}}^{\infty }\left( \Omega ;L^{2}\left(
\mathcal{O}\right) \right) $ we call a solution to (\ref{ecu1}) some pair $%
\left( Y,Z\right) $ which satisfies
\begin{equation*}
Y\in L_{\mathcal{F}}^{2}\left( \Omega ;L^{\infty }\left( \left[ 0,T\right]
;L^{2}\left( \mathcal{O}\right) \right) \right) \cap L_{\mathcal{F}%
}^{2}\left( \Omega ;C\left( \left[ 0,T\right] ;H^{-1}\left( \mathcal{O}%
\right) \right) \right);
\end{equation*}%
\begin{equation*}
Z\in L_{\mathcal{F}}^{2}\left( \Omega \times \left[ 0,T\right]
;L_{2}\left( L^{2}\left( \mathcal{O}\right) ;H^{-1}\left( \mathcal{O}\right)
\right) \right);
\end{equation*}%
and%
\begin{equation*}
\begin{aligned}
\left\langle Y_{t},e_{k}\right\rangle _{H^{-1}\left( \mathcal{O}\right) }
=&\left\langle \zeta ,e_{k}\right\rangle _{H^{-1}\left( \mathcal{O}\right)
}-\int_{t}^{T}\int_{\mathcal{O}}\Psi \left( Y_{s}(\xi)\right) e_{k}(\xi)d\xi ds \\
&+\int_{t}^{T}\left\langle h\left( Y_{s},Z_{s}\right) ,e_{k}\right\rangle
_{H^{-1}\left( \mathcal{O}\right) }ds-\int_{t}^{T}\left\langle
e_{k},Z_{s}dW_{s}\right\rangle _{H^{-1}\left( \mathcal{O}\right) },
\end{aligned}
\end{equation*}
$\mathbb{P}$-a.s. for every eigen-function $e_{k}$ as introduced before.
\end{definition}

We formulate the main consistency result.

\begin{theorem}\label{Th1}
Under the assumptions above, for each $\zeta \in L_{\mathcal{F}_{T}}^{\infty
}\left( \Omega ;L^{2}\left( \mathcal{O}\right) \right) $ the backward
stochastic partial differential equation (\ref{ecu1}) has a unique adapted
solution $\left( Y,Z\right) $ in the sense of Definition \ref{Def1}.
\end{theorem}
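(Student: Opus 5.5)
We will prove existence by Yosida approximation and uniqueness by an Itô-formula argument in $H^{-1}(\mathcal{O})$. For $\lambda>0$ let $\Psi_\lambda$ be the Yosida approximation of $\Psi$, regularised as $\Psi_\lambda(r)+\lambda r$ so that it is Lipschitz and strongly monotone. The approximating equation
\[
dY^\lambda_t=-\left(\Delta\Psi_\lambda(Y^\lambda_t)+h(Y^\lambda_t,Z^\lambda_t)\right)dt+Z^\lambda_t dW_t,\qquad Y^\lambda_T=\zeta,
\]
then fits the Gelfand triple $L^2(\mathcal{O})\subset H^{-1}(\mathcal{O})\subset (L^2(\mathcal{O}))^*$. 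Together with $c_1)$, $d_1)$ and the local monotonicity b), this gives the Lyapunov, coercivity and local monotonicity hypotheses of \cite{Liu}, so a unique variational solution $(Y^\lambda,Z^\lambda)$ exists.

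\textbf{Uniform estimates.} Apply Itô's formula to $\|Y^\lambda_t\|^2_{H^{-1}}$ on $[t,T]$. The relation $\langle -\Delta\Psi_\lambda(y),y\rangle_{H^{-1}}=\int_{\mathcal{O}}\Psi_\lambda(y)y\,d\xi$ and the lower bound in i) c), transferred to $\Psi_\lambda$ up to $\lambda$-corrections, give a coercive term of order $\int\!\!\int |Y^\lambda|^{m+1}$. Hypothesis $c_2)$ controls the $h$-term. Provided $\gamma$ is small compared with the $\|Z\|^2$ term on the left, Gronwall and Burkholder--Davis--Gundy then yield
\[
\mathbb{E}\sup_t\|Y^\lambda_t\|^2_{H^{-1}}+\mathbb{E}\int_0^T\!\!\int_{\mathcal{O}}|Y^\lambda|^{m+1}+\mathbb{E}\int_0^T\|Z^\lambda\|^2_{L_2(L^2;H^{-1})}\le C .
\]
A second estimate in $L^2(\mathcal{O})$ follows from Itô's formula for $\|Y^\lambda\|^2_{L^2}$ (justified via the $H_n$ projections and $c_1)$), using $\Psi'\ge\alpha_0$ and $\zeta\in L^\infty(\Omega;L^2)$. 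It gives the $L^\infty(0,T;L^2)$ bound and bounds $\Psi_\lambda(Y^\lambda)$ in $L^{(m+1)/m}$.

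\textbf{Cauchy property and limit.} Apply Itô's formula to $\|Y^\lambda-Y^\mu\|^2_{H^{-1}}$. The porous-media part gives $\int(\Psi_\lambda(Y^\lambda)-\Psi_\mu(Y^\mu))(Y^\lambda-Y^\mu)$, which by the standard Yosida inequality is $\ge -C(\lambda+\mu)\int(|\Psi_\lambda(Y^\lambda)|^2+|\Psi_\mu(Y^\mu)|^2)$. The $h$-part is bounded using b), but the weight $\rho(Y^\lambda)$ is only locally bounded. We therefore use the weighted Itô formula announced in the introduction, applied to
\[
e^{-\int_t^T \kappa\,\rho(Y^\lambda_s)^2ds}\|Y^\lambda_t-Y^\mu_t\|^2_{H^{-1}}
\]
with $\kappa$ large. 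This absorbs both the $\rho\|\cdot\|^2$ term and, through Young's inequality, the cross term $\rho\|y\|\|z\|$ into $\tfrac12\|Z^\lambda-Z^\mu\|^2$. Using the uniform bounds, the weight is positive a.s. This gives strong convergence of $Y^\lambda$ in $C([0,T];H^{-1})$ and of $Z^\lambda$ in $L^2$, at least in probability and then in $L^2$ by uniform integrability. If the $\Psi_\lambda(Y^\lambda)$ terms are not uniformly controlled in $L^2$, we insert a second Yosida layer, as the introduction suggests, and treat that layer with the same weighted argument.

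\textbf{Identification and uniqueness.} The expected main obstacle is identifying the limits $\Psi_\lambda(Y^\lambda)\rightharpoonup\eta$ and $h(Y^\lambda,Z^\lambda)\rightharpoonup g$, since $\Delta\Psi+h$ is not maximal monotone. For $\Psi$, the strong convergence $Y^\lambda\to Y$ in $H^{-1}$, combined with the weak convergence of $\Psi_\lambda(Y^\lambda)$ in $L^{(m+1)/m}$, gives $\limsup\int\Psi_\lambda(Y^\lambda)Y^\lambda\le\int\eta Y$. Maximal monotonicity of $\Psi$ on $L^{m+1}\times L^{(m+1)/m}$ then gives $\eta=\Psi(Y)$. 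For $h$, we use hemicontinuity a) with the Minty trick for the monotone part. We apply b) with $y_2=Y-s\phi$, test against the weighted Itô identity for the limit equation, let $\lambda\to0$ and then $s\to0$. Because $Z^\lambda\to Z$ strongly, the $z$-dependence causes no problem. Passing to the limit in the projected equations against $e_k$ gives Definition \ref{Def1}. Uniqueness follows from the same weighted Itô computation for two solutions, now with the $\Psi$ term nonnegative by monotonicity, which forces $\|Y^1-Y^2\|_{H^{-1}}=0$ and $Z^1=Z^2$.
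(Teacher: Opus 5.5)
Your Cauchy step does not close. After the Yosida inequality you are left with $C(\lambda+\mu)\,\mathbb{E}\int_0^T(\|\Psi_\lambda(Y^\lambda_s)\|^2_{L^2(\mathcal{O})}+\|\Psi_\mu(Y^\mu_s)\|^2_{L^2(\mathcal{O})})\,ds$. Nothing in your proposal bounds this uniformly in $\lambda$.

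\begin{itemize}
\item The $H^{-1}$ energy estimate controls $\Psi_\lambda(Y^\lambda)$ only in $L^{\frac{m+1}{m}}$, which is weaker than $L^2$ for $m>1$. It also gives $\lambda\,\mathbb{E}\int_0^T\|\Psi_\lambda(Y^\lambda_s)\|^2_{L^2}ds\le C$, but that does not absorb the cross term $\mu\|\Psi_\lambda(Y^\lambda)\|^2_{L^2}$ when $\mu\gg\lambda$.
\item The $L^2(\mathcal{O})$-It\^{o} estimate yields $\int_{\mathcal{O}}\Psi_\lambda'(Y^\lambda)|\nabla Y^\lambda|^2$. This dominates $\|\nabla\Psi_\lambda(Y^\lambda)\|^2_{L^2}$ only up to the factor $\sup\Psi_\lambda'\le 1/\lambda$.
\item Your weight $e^{-\kappa\int_t^T\rho^2}$ is at most $1$, so it cannot help with this term.
\item ``Insert a second Yosida layer and run the same weighted argument'' is not a mechanism that produces the bound.
\end{itemize}

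In the paper the second layer has exactly this job. Step I (Lemmas \ref{lema1}--\ref{lema3}) approximates $A_\lambda=-\Delta\Psi_\lambda$ in $H^{-1}(\mathcal{O})$ to obtain $\mathbb{E}\int_t^T\|A_\lambda(Y^\lambda_s)\|^2_{-1}ds\le C$. That is, $\Psi_\lambda(Y^\lambda)$ is bounded in $L^2(\Omega\times(0,T);H^1_0(\mathcal{O}))$. Via Poincar\'e this closes Step II and bounds $U_\lambda(Y^\lambda,Z^\lambda)$ in $L^2(H^{-1})$.

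You cannot simply import this, for two reasons.

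\begin{itemize}
\item The constant $\inf_r(\Psi_\lambda^{-1})'(r)$ used in Lemma \ref{lema1} equals $\lambda$ as soon as $m>1$, for the same reason as above.
\item For $h=0$ and a deterministic $\zeta\in L^2(\mathcal{O})\setminus L^{m+1}(\mathcal{O})$, no $\lambda$-uniform $L^2(H^1_0)$ bound can hold. Set $\Phi(r)=\int_0^r\Psi(s)\,ds$ and let $\Phi_\lambda$ be its Moreau envelope, so $\Phi_\lambda'=\Psi_\lambda$. Then $\int_{\mathcal{O}}\Phi_\lambda(Y^\lambda_t)+\int_t^T\|\nabla\Psi_\lambda(Y^\lambda_s)\|^2_{L^2}ds=\int_{\mathcal{O}}\Phi_\lambda(\zeta)\uparrow\int_{\mathcal{O}}\Phi(\zeta)=\infty$. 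On the other hand $\int_0^T\!\int_{\mathcal{O}}\Phi_\lambda(Y^\lambda)\le\int_0^T\!\int_{\mathcal{O}}\Psi_\lambda(Y^\lambda)Y^\lambda\le\frac12\|\zeta\|^2_{-1}$, which is incompatible with a uniform gradient bound.
\end{itemize}

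So you need one of two things. Either a stronger terminal condition, e.g.\ $\zeta\in L^\infty(\Omega;L^{m+1}(\mathcal{O}))$, combined with It\^{o}'s formula for $\int_{\mathcal{O}}\Phi_\lambda(Y^\lambda)$; its second-order term has the favourable sign for a backward equation. Or a passage to the limit that does not go through this bound.

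There are three further points.

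\begin{itemize}
\item \textbf{Identifying $\Psi$.} Strong convergence of $Y^\lambda$ in $H^{-1}(\mathcal{O})$ plus weak convergence of $\Psi_\lambda(Y^\lambda)$ in $L^{\frac{m+1}{m}}$ does not yield $\limsup\mathbb{E}\int_0^T\!\int_{\mathcal{O}}\Psi_\lambda(Y^\lambda)Y^\lambda\le\mathbb{E}\int_0^T\!\int_{\mathcal{O}}\eta Y$. The reason is that $H^{-1}$ is in duality with $H^1_0$, not with $L^{\frac{m+1}{m}}$. You need one of the following:
  \begin{itemize}
  \item weak convergence of $\Psi_\lambda(Y^\lambda)$ in $L^2(\Omega\times(0,T);H^1_0(\mathcal{O}))$, which is the same problematic bound;
  \item the energy identities for $Y^\lambda$ and for the limit equation, i.e.\ It\^{o}'s formula in the triple $L^{m+1}(\mathcal{O})\subset H^{-1}(\mathcal{O})\subset(L^{m+1}(\mathcal{O}))^*$, together with strong convergence of $(Y^\lambda,Z^\lambda)$ and weak convergence of $h(Y^\lambda,Z^\lambda)$.
  \end{itemize}
\item \textbf{Where your route is better.} Granted strong convergence, your separate identification of $h$ is simpler than the paper's Step III, which identifies $\Delta\eta+\phi$ jointly through a Minty argument weighted by $e^{r_t}$. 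In your setting $\langle h(Y^\lambda,Z^\lambda),Y^\lambda-v\rangle_{H^{-1}}$ passes to the limit directly, after localizing $\rho$. Your weight built from $\rho(Y^\lambda)$ is also more careful than Step II, which simply assumes $\rho\le 1$.
\item \textbf{Uniqueness.} The paper contains no uniqueness argument. Yours needs It\^{o}'s formula for $\|Y^1-Y^2\|^2_{H^{-1}}$, hence $Y^i\in L^{m+1}(\Omega\times(0,T)\times\mathcal{O})$, so that $\int_{\mathcal{O}}(\Psi(Y^1)-\Psi(Y^2))(Y^1-Y^2)$ is the legitimate drift pairing. This integrability is not part of Definition \ref{Def1}.
\end{itemize}
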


\begin{proof}
The first step is to construct a Yosida approximation of the operator $\Psi$ to handle its polynomial growth, thereby replacing it with an approximation involving Lipschitz continuous operators.

More precisely, we consider, for each $\lambda >0$ the approximation%
\begin{equation*}
\Psi _{\lambda }\left( x\right) =\frac{1}{\lambda }\left( x-\left(
Id+\lambda \Psi \right) ^{-1}\left( x\right) \right) =\Psi \left( \left(
Id+\lambda \Psi \right) ^{-1}\left( x\right) \right),
\end{equation*}%
where $Id$ is the identity function on $\mathbb{R}$. We further consider the corresponding approximating equation%
\begin{equation}
\left\{
\begin{array}{ll}
dY_{t}^{\lambda }=-\left( \Delta \Psi _{\lambda }\left( Y_{t}^{\lambda
}\right) +h\left( Y_{t}^{\lambda },Z_{t}^{\lambda }\right) \right)
dt+Z_{t}^{\lambda }dW_{t}, & \quad \left( 0,T\right) \times \mathcal{O}%
\times \Omega , \\
\Psi _{\lambda }\left( Y_{t}^{\lambda }\right) =0, & \quad \left( 0,T\right)
\times \partial \mathcal{O}\times \Omega , \\
Y_{T}^{\lambda }=\xi , & \quad \mathcal{O}.%
\end{array}%
\right.  \label{aprox1}
\end{equation}

The reader is reminded that $\Psi _{\lambda }$ inherits the monotonicity properties form $%
\Psi $. In addition, it exhibits linear growth of type $C_{1}\left( 1+\left\vert x\right\vert \right)
$ for all $x\in \mathbb{R}$, albeit the fact that the constant is not uniform in $\lambda>0$.

We are now able to invoke the consistency result in \cite[Theorem 2.2]{Liu} in order to get that the equation (\ref%
{aprox1}) has a unique solution in the sense of Definition \ref{Def1}, solution which
further satisfies
\begin{equation}
\mathbb{E}\left[\underset{t\in \left[ 0,T\right] }{\sup }\left\Vert Y_{t}^{\lambda
}\right\Vert _{L^{2}\left( \mathcal{O}\right) }^{2}\right]+\frac{1}{2}\mathbb{E}\left[
\int_{0}^{T}\left\Vert Z_{s}^{\lambda }\right\Vert _{L_{2}\left( L^{2}\left(
\mathcal{O}\right) ;L^{2}\left( \mathcal{O}\right) \right) }^{2}ds\right]\leq
C\left\Vert \xi \right\Vert _{L_{\mathcal{F}_{T}}^{\infty }\left( \Omega
;L^{2}\left( \mathcal{O}\right) \right) }^{2},\quad a.s.  \label{estim-vechi}
\end{equation}
\bigskip

\textbf{Part 1 (weak convergence in H}$^{-1}$\textbf{)}

\bigskip

Keeping in mind the assumption $c)$ and the estimate (\ref{estim-vechi}) we
can apply the It\^{o} formula in $H^{-1}\left( \mathcal{O}\right) $ to the
equation%
\begin{equation}
Y_{t}^{\lambda }=\xi +\int_{t}^{T}\left( \Delta \left( \Psi _{\lambda
}\right) \left( Y_{s}^{\lambda }\right) +h\left( Y_{s}^{\lambda
},Z_{s}^{\lambda }\right) \right) ds-\int_{t}^{T}Z_{s}^{\lambda }dW_{s},
\label{sol-aprox}
\end{equation}
and we get
\begin{equation}\label{Ito1}
\begin{aligned}
&e^{\alpha t}\left\Vert Y_{t}^{\lambda }\right\Vert _{H^{-1}\left( \mathcal{%
O}\right) }^{2}+\alpha \int_{t}^{T}e^{\alpha s}\left\Vert Y_{s}^{\lambda }\right\Vert
_{H^{-1}\left( \mathcal{O}\right) }ds+2\int_{t}^{T}e^{\alpha s}\left\langle \Psi _{\lambda }\left(
Y_{s}^{\lambda }\right) ,Y_{s}^{\lambda }\right\rangle _{L^{2}\left(
\mathcal{O}\right) }ds   \\
=&e^{\alpha T}\left\Vert \xi \right\Vert _{H^{-1}\left( \mathcal{O}\right)
}^{2}+2\int_{t}^{T}e^{\alpha s}\left\langle h\left( Y_{s}^{\lambda
},Z_{s}^{\lambda }\right) ,Y_{s}^{\lambda }\right\rangle _{H^{-1}\left(
\mathcal{O}\right) }ds  \\
&-2\int_{t}^{T}e^{\alpha s}\left\langle Y_{s}^{\lambda },Z_{s}^{\lambda
}dW_{s}\right\rangle _{H^{-1}\left( \mathcal{O}\right)
}-\int_{t}^{T}e^{\alpha s}\left\Vert Z_{s}^{\lambda }\right\Vert
_{L_{2}\left( L^{2}\left( \mathcal{O}\right) ,H^{-1}\left( \mathcal{O}
\right) \right) }^{2}ds.
\end{aligned}
\end{equation}

We write%

\begin{equation*}
\begin{aligned}
&\left\langle \Psi _{\lambda }\left( Y_{s}^{\lambda}\right)
,Y_{s}^{\lambda }\right\rangle _{L^{2}\left( \mathcal{O}\right) } \\
=&\left\langle \Psi \left( J_{\lambda }\left( Y_{s}^{\lambda }\right)
\right) ,J_{\lambda }\left( Y_{s}^{\lambda }\right) -J_{\lambda }\left(
Y_{s}^{\lambda }\right) +Y_{s}^{\lambda }\right\rangle _{L^{2}\left(
\mathcal{O}\right) } \\
=&\left\langle \Psi \left( J_{\lambda }\left( Y_{s}^{\lambda }\right)
\right) ,J_{\lambda }\left( Y_{s}^{\lambda }\right) \right\rangle
_{L^{2}\left( \mathcal{O}\right) }+\lambda \left\Vert Y_{s}^{\lambda
}-J_{\lambda }\left( Y_{s}^{\lambda }\right) \right\Vert _{L^{2}\left(
\mathcal{O}\right) }^{2},
\end{aligned}
\end{equation*}
and%

\begin{equation*}
\left\langle h\left( Y_{s}^{\lambda },Z_{s}^{\lambda }\right)
,Y_{s}^{\lambda }\right\rangle _{H^{-1}\left( \mathcal{O}\right) }\leq
K\left\Vert Y_{s}^{\lambda }\right\Vert _{H^{-1}\left( \mathcal{O}\right)
}^{2}+\gamma \left\Vert Z_{s}^{\lambda }\right\Vert _{L_{2}\left(
L^{2}\left( \mathcal{O}\right) ;H^{-1}\left( \mathcal{O}\right) \right)
}^{2}.
\end{equation*}

Going back to estimate (\ref{Ito1}) we get that
\begin{equation}\label{Ito7}
\begin{aligned}
&\left\Vert Y_{t}^{\lambda }\right\Vert _{H^{-1}\left( \mathcal{O}\right)
}^{2}+\int_{t}^{T}e^{\alpha \left( s-t\right) }\left\langle \Psi \left(
J_{\lambda }\left( Y_{s}^{\lambda }\right) \right) ,J_{\lambda }\left(
Y_{s}^{\lambda }\right) \right\rangle _{L^{2}\left( \mathcal{O}\right) }ds \\
&+\lambda \int_{t}^{T}e^{\alpha \left( s-t\right) }\left\Vert
Y_{s}^{\lambda }-J_{\lambda }\left( Y_{s}^{\lambda }\right) \right\Vert
_{L^{2}\left( \mathcal{O}\right) }^{2}ds \\
\leq& e^{\alpha \left( T-t\right) }\left\Vert \xi \right\Vert
_{H^{-1}\left( \mathcal{O}\right) }^{2}+\left( 2K-\alpha \right)
\int_{t}^{T}e^{\alpha \left( s-t\right) }\left\Vert Y_{s}^{\lambda
}\right\Vert _{H^{-1}\left( \mathcal{O}\right) }^{2}ds   \\
&-2\int_{t}^{T}e^{\alpha \left( s-t\right) }\left\langle Y_{s}^{\lambda
},Z_{s}^{\lambda }dW_{s}\right\rangle _{H^{-1}\left( \mathcal{O}\right) }   \\
&+\left( 2\gamma -1\right) \int_{t}^{T}e^{\alpha \left( s-t\right)
}\left\Vert Z_{s}^{\lambda }\right\Vert _{L_{2}\left( L^{2}\left( \mathcal{O}
\right) ,H^{-1}\left( \mathcal{O}\right) \right) }^{2}ds.
\end{aligned}
\end{equation}
Keeping in mind that
\begin{equation*}
   \left\langle \Psi \left( J_{\lambda }\left( Y_{s}^{\lambda }\right)
\right) ,J_{\lambda }\left( Y_{s}^{\lambda }\right) \right\rangle
_{L^{2}\left( \mathcal{O}\right) } \geq \alpha _3\left\vert J_{\lambda }\left( Y_{s}^{\lambda
}\right) \right\vert ^{m+1}+\alpha _{4}\left\vert J_{\lambda }\left( Y_{s}^{\lambda
}\right)\right\vert
^{2},
\end{equation*}
and taking the expectation in the previous relation we obtain, for $\alpha >2K$ and $2\gamma <1$, that
\begin{equation}\label{Ito8}
\begin{aligned}
&\mathbb{E}\left[\left\Vert Y_{t}^{\lambda }\right\Vert _{H^{-1}\left( \mathcal{O}
\right) }^{2}\right]+\mathbb{E}\left[\int_{t}^{T}e^{\alpha \left( s-t\right) } \left( \alpha _{3}\left\vert
J_{\lambda }\left( Y_{s}^{\lambda }\right) \right\vert ^{m+1}+\alpha _{4}\left\vert J_{\lambda }\left( Y_{s}^{\lambda
}\right)\right\vert
^{2} \right) ds\right]  \\
&+\lambda \mathbb{E}\left[ \int_{t}^{T}e^{\alpha \left( s-t\right) }\left\Vert
Y_{s}^{\lambda }-J_{\lambda }\left( Y_{s}^{\lambda }\right) \right\Vert
_{L^{2}\left( \mathcal{O}\right) }^{2}ds\right]+\left( \alpha -2K\right) \mathbb{E}\left[\int_{t}^{T}e^{\alpha \left(
s-t\right) }\left\Vert Y_{s}^{\lambda }\right\Vert _{H^{-1}\left( \mathcal{O}%
\right) }^{2}ds\right]\\
&+\left( 1-2\gamma \right) \mathbb{E}\left[\int_{t}^{T}e^{\alpha \left(
s-t\right) }\left\Vert Z_{s}^{\lambda }\right\Vert _{L_{2}\left( L^{2}\left(
\mathcal{O}\right) ,H^{-1}\left( \mathcal{O}\right) \right) }^{2}ds\right]\\
\leq& C\left[1+\mathbb{E}e^{\alpha \left( T-t\right) }\left\Vert \xi \right\Vert
_{H^{-1}\left( \mathcal{O}\right) }^{2}\right],
\end{aligned}
\end{equation}
for some constant $C$ independent of $\lambda $ (but depending on $\alpha_3,\ \alpha_4$ and so on; this constant is generic and is allowed to change from one line of the proof to another, in order to facilitate reading).

We take now the supremum on $t$, followed by the expectation in (\ref{Ito7}) and we get
\begin{equation}\label{BDG7}
\begin{aligned}
& \mathbb{E}\left[ \underset{\tau \in \left[ t,T\right] }{\sup }e^{\alpha \tau}\left\Vert
Y_{\tau }^{\lambda}\right\Vert _{H^{-1}\left( \mathcal{O}
\right) }^{2}\right] +\left( 1-2\gamma\right) \mathbb{E} \left[\int_{t}^{T}e^{\alpha s
}\left\Vert Z_{s}^{\lambda }\right\Vert _{L_{2}\left( L^{2}\left( \mathcal{O}
\right) ,H^{-1}\left( \mathcal{O}\right) \right) }^{2}ds \right] \\
\leq& e^{\alpha T}\mathbb{E}\left[\left\Vert \xi \right\Vert
_{H^{-1}\left( \mathcal{O}\right) }^{2}\right]+2\mathbb{E}\left[ \underset{\tau \in \left[ t,T\right] }{\sup } \left\vert\int_{\tau}^{T}e^{\alpha s }\left\langle Y_{s}^{\lambda
},Z_{s}^{\lambda }dW_{s}\right\rangle _{H^{-1}\left( \mathcal{O}\right) } \right\vert \right].
\end{aligned}
\end{equation}
By the Burkholder-Davis-Gundy inequality we have that%
\begin{equation*}
\begin{aligned}
&\mathbb{E}\left[ \underset{\tau \in \left[ t,T\right] }{\sup }\left\vert
\int_{\tau }^{T}e^{\alpha s}\left\langle Y_{s}^{\lambda },Z_{s}^{\lambda
}dW_{s}\right\rangle _{H^{-1}\left( \mathcal{O}\right)}\right\vert \right]  \\
\leq& C\mathbb{E}\left[ \int_{t}^{T}e^{\alpha s}\left\Vert Y_{s}^{\lambda
}\right\Vert _{H^{-1}\left( \mathcal{O}\right) }^{2}\left\Vert Z_{s}^{\lambda
}\right\Vert _{L_{2}\left( L^{2}\left( \mathcal{O}\right)
,H^{-1}\left( \mathcal{O}\right) \right) }^{2}ds\right] ^{1/2} \\
\leq& C\left\{ \frac{1}{4C}\mathbb{E}\left[\underset{\tau \in \left[ t,T\right] }{%
\sup }e^{\alpha \tau}\left\Vert Y_{\tau }^{\lambda}\right\Vert _{H^{-1}\left( \mathcal{O}\right) }^{2}\right]+C\mathbb{E}\left[\int_{t}^{T}e^{\alpha s}\left\Vert
Z_{s}^{\lambda}\right\Vert _{L_{2}\left( L^{2}\left( \mathcal{O%
}\right) ,H^{-1}\left( \mathcal{O}\right) \right) }^{2}ds\right]\right\},
\end{aligned}
\end{equation*}
which, by going back to (\ref{BDG7}), and recalling the bounds on the $Z$ term from (\ref{Ito8}), leads to
\begin{eqnarray*}
&&\mathbb{E}\left[ \underset{\tau \in \left[ t,T\right] }{\sup }%
e^{\alpha \tau}\left\Vert Y_{\tau }^{\lambda}\right\Vert _{H^{-1}\left( \mathcal{O}
\right)}^{2}\right] \leq C \left[ \mathbb{E}\left\Vert \xi \right\Vert _{H^{-1}\left( \mathcal{O}
\right)}^{2}+1\right].
\end{eqnarray*}

We obtained that
\begin{eqnarray*}
&&\left\{ J_{\lambda }\left( Y^{\lambda }\right) \right\} _{\lambda }%
\text{ is bounded in }L^{m+1}\left( \Omega \times \left( 0,T\right) \times
\mathcal{O}\right); \\
&&\left\{ Y^{\lambda }\right\} _{\lambda }\text{is bounded in }L^{\infty
}\left( 0,T;L^{2}\left( \Omega ;H^{-1}\left( \mathcal{O}\right) \right)
\right) \cap L^{2}\left( \Omega ;C\left( \left[ 0,T\right] ;H^{-1}\left(
\mathcal{O}\right) \right) \right); \\
&&\left\{ Z^{\lambda }\right\} _{\lambda }\text{ is bounded in }%
L^{2}\left( \left( 0,T\right) \times \Omega ;L_{2}\left( L^{2}\left(
\mathcal{O}\right) ,H^{-1}\left( \mathcal{O}\right) \right) \right) .
\end{eqnarray*}

Owing to the assumptions on $\Psi $, one has
\begin{equation*}
\left\vert \Psi _{\lambda }\left( Y_{s}^{\lambda }\right) \right\vert ^{%
\frac{m+1}{m}}=\left\vert \Psi \left( J_{\lambda }\left( Y_{s}^{\lambda
}\right) \right) \right\vert ^{\frac{m+1}{m}}\leq C(\left\vert J_{\lambda
}\left( Y_{s}^{\lambda }\right) \right\vert ^{m+1}+\vert J_{\lambda
}\left( Y_{s}^{\lambda }\right) \vert ^{\frac{m+1}{m}}+1).
\end{equation*}%
As a consequence,
\begin{equation}
\left\{ \Psi _{\lambda }\left( Y^{\lambda }\right) \right\} _{\lambda }%
\text{ is bounded in }L^{\frac{m+1}{m}}\left( \Omega \times \left(
0,T\right) \times \mathcal{O}\right) .  \label{marg-psi}
\end{equation}

From the assumptions on $h$ we have that
\begin{equation*}
\mathbb{E}\left[\left\Vert h\left( Y_{s}^{\lambda },Z_{s}^{\lambda }\right) \right\Vert
_{\left( L^{2}\left( \mathcal{O}\right) \right) ^{\ast }}^2\right]\leq K\left( \rho
\left( Y_{s}^{\lambda }\right) +\left\Vert Z_{s}^{\lambda }\right\Vert
_{L_{2}\left( L^{2}\left( \mathcal{O}\right) ;H^{-1}\left( \mathcal{O}\right)
\right) }\right) \leq C.
\end{equation*}

We have for the moment the following weak convergences%
\begin{eqnarray*}
J_{\lambda }\left( Y_{s}^{\lambda }\right) &\rightharpoonup &Y_{s}\text{ in }%
L^{m+1}\left( \Omega \times \left( 0,T\right) \times \mathcal{O}\right); \\
Z_{s}^{\lambda } &\rightharpoonup &Z_{s}\text{ in }L^{2}\left( \left(
0,T\right) \times \Omega ;L_{2}\left( L^{2}\left( \mathcal{O}\right)
,H^{-1}\left( \mathcal{O}\right) \right) \right); \\
\Psi _{\lambda }\left( Y_{s}^{\lambda }\right) &\rightharpoonup &\eta \text{
in }L^{\frac{m+1}{m}}\left( \Omega \times \left( 0,T\right) \times \mathcal{O%
}\right); \\
h\left( Y_{s}^{\lambda },Z_{s}^{\lambda }\right) &\rightharpoonup &\phi
\text{ in }L^{2}\left( \left( 0,T\right) \times \Omega ;\left( L^{2}\left(
\mathcal{O}\right) \right) ^{\ast }\right) .
\end{eqnarray*}%
We can now pass to the limit and get%
\begin{equation*}
\begin{aligned}
\left\langle Y_{t},e_{k}\right\rangle _{H^{-1}\left( \mathcal{O}\right) }
=&\left\langle \xi ,e_{k}\right\rangle _{H^{-1}\left( \mathcal{O}\right) }
\\
&-\int_{t}^{T}\int_{\mathcal{O}}\eta e_{k}d\xi ds+\int_{t}^{T}\left\langle
\phi ,e_{k}\right\rangle _{H^{-1}\left( \mathcal{O}\right) }ds \\
&-\int_{t}^{T}\left\langle e_{k},Z_{s}dW_{s}\right\rangle _{H^{-1}\left(
\mathcal{O}\right) },~\mathbb{P}-a.s.
\end{aligned}
\end{equation*}

In order to conclude the proof we still need to identify $\Psi \left(Y\right) =\eta $ and $h\left( Y,Z\right)
=\phi $ and for this we need some new estimates.

\bigskip

\textbf{Part 2 (identification of the limit)}

\bigskip

This section is the most technical part of the paper and needs to be treated
in several steps.

The main difficulty comes from the fact that the nonlinear operator $\Delta
\Psi +h$ is not maximal monotone in the natural space $L^{2}\left(
0,T;L^{2}\left( \Omega ;H^{-1}\left( \mathcal{O}\right) \right) \right) $
and therefore we can't apply directly the classical results. More precisely,
we need to prove first some preliminary estimates and finally to deduce the
identification of the limit by a new result which is adapted to our
situation.

In order to prove this identification, we need to use some monotonicity
arguments.

For this reason, we note that the operator
\begin{equation*}
\begin{aligned}
A_{\lambda } :\ &D\left( A_{\lambda }\right) \subset H^{-1}\left( \mathcal{O}%
\right) \rightarrow H^{-1}\left( \mathcal{O}\right); \\
A_{\lambda }\left( y\right) =&-\Delta \Psi _{\lambda }\left( y\right); \\
D\left( A_{\lambda }\right) =&\left\{ y\in H^{-1}\left( \mathcal{O}\right)
\cap L^{1}\left( \mathcal{O}\right) :\Psi _{\lambda }\left( x\right) \in
H_{0}^{1}\left( \mathcal{O}\right) \right\} ,
\end{aligned}
\end{equation*}%
is maximal monotone in $L^{2}\left( \mathcal{O}\right) \subset H^{-1}\left(
\mathcal{O}\right) \subset \left( L^{2}\left( \mathcal{O}\right) \right)
^{\ast }$ and, on the other hand, the operator
\begin{equation*}
h:L^{2}\left( \mathcal{O}\right) \times L_{2}\left( L^{2}\left( \mathcal{O}%
\right) ;H^{-1}\left( \mathcal{O}\right) \right) \rightarrow \left(
L^{2}\left( \mathcal{O}\right) \right) ^{\ast },
\end{equation*}%
is monotone and continuous in the first variable.

Since the two nonlinear operators can not be treated separately, we need
first to denote%
\begin{equation*}
\begin{aligned}
U_{\lambda } &:L^{2}\left( \mathcal{O}\right) \times L_{2}\left(
L^{2}\left( \mathcal{O}\right) ;H^{-1}\left( \mathcal{O}\right) \right)
\rightarrow \left( L^{2}\left( \mathcal{O}\right) \right) ^{\ast } ,\\
U_{\lambda }\left( y,z\right) &=-A_{\lambda }\left( y\right) +h\left(
y,z\right) .
\end{aligned}
\end{equation*}
Since
\begin{equation*}
-\int_{t}^{T}\int_{\mathcal{O}}\eta e_{k}d\xi ds+\int_{t}^{T}\left\langle
\phi ,e_{k}\right\rangle _{H^{-1}\left( \mathcal{O}\right)
}ds=\int_{t}^{T}\left\langle \Delta \eta +\phi ,e_{k}\right\rangle
_{H^{-1}\left( \mathcal{O}\right) }ds,
\end{equation*}%
and we need to show that%
\begin{equation*}
 \Delta
\eta +\phi=\Delta \Psi \left( Y\right) +h\left( Y,Z\right),
\end{equation*}
with $\Delta \Psi \left( Y\right) +h\left( Y,Z\right) = U\left( Y,Z\right)$.

\bigskip

\textbf{Step I (weak convergences \textbf{of }}$\mathbf{U}_{\lambda }\left(
\mathbf{Y_{t}^{\lambda },Z_{t}^{\lambda }}\right) $\textbf{\textbf{\ }in }$%
H^{-1}$\textbf{)}

\bigskip

In this step we intend to prove that
\begin{equation}
\mathbb{E}\int_{t}^{T}\left\Vert U_{\lambda }\left( Y_{s}^{\lambda
},Z_{s}^{\lambda }\right) \right\Vert _{-1}^{2}ds<C.  \label{margU}
\end{equation}

In order to study the operator $U_{\lambda }$ in $H^{-1}\left( \mathcal{O}%
\right) $\ which means that the operator $\Psi _{\lambda }\left(
Y_{s}^{\lambda }\right) $ in $H_{0}^{1}\left( \mathcal{O}\right) $ we need
to take a second approximation in $\varepsilon $ and to pass to the limit as
follows.

Since the operators $A_{\lambda }$ is maximal monotone in $H^{-1}\left(
\mathcal{O}\right) $ we can take the Yosida approximation in $H^{-1}\left(
\mathcal{O}\right) $ which has the form%
\begin{equation*}
\left( A_{\lambda }\right) ^{\varepsilon }=\frac{1}{\varepsilon }\left(
Id-\left( Id+\varepsilon A_{\lambda }\right) ^{-1}\right) ,\quad \forall
\varepsilon >0,
\end{equation*}%
where $Id$ is the identity in $H^{-1}\left( \mathcal{O}\right) $.

We denote the resolvent of $A_{\lambda }$ by $J_{\varepsilon }\left(
x\right) =\left( Id+\varepsilon A_{\lambda }\right) ^{-1}\left( x\right) .$

We take now a second approximation for the equation which is
\begin{equation}
\left\{
\begin{array}{ll}
dY_{t}^{\lambda ,\varepsilon }=\left( A_{\lambda }\right) ^{\varepsilon
}\left( Y_{t}^{\lambda ,\varepsilon }\right) dt-h\left( Y_{t}^{\lambda
,\varepsilon },Z_{t}^{\lambda ,\varepsilon }\right) +Z_{t}^{\lambda
,\varepsilon }dW_{t}, & \left( 0,T\right) \times \mathcal{O}\times \Omega ,
\\
\Psi _{\lambda }\left( Y_{t}^{\lambda ,\varepsilon }\right) =0, & \left(
0,T\right) \times \partial \mathcal{O}\times \Omega , \\
Y_{T}^{\lambda ,\varepsilon }=\xi , & \mathcal{O}.%
\end{array}%
\right.  \label{aprox2}
\end{equation}

Note that the operator $\left( A_{\lambda }\right) ^{\varepsilon }$ is
maximal monotone and Lipschitz in $H^{-1}\left( \mathcal{O}\right) $ and
also $L^{2}\left( \mathcal{O}\right) .$

We can start now to prepare the convergences in $\varepsilon $ through the
following estimates.

The detailed proofs follow fairly standard arguments and are therefore collected in the Appendix (Lemmas \ref{lema1}, \ref{lema2}, \ref{lema3}), both to keep the paper self-contained and to improve the readability of the main text.

\begin{lemma}
\label{lema1}Under the assumptions above, we have
\begin{equation}
\mathbb{E}\int_{t}^{T}\left\Vert \left( A_{\lambda }\right) ^{\varepsilon
}\left( Y_{s}^{\lambda ,\varepsilon }\right) \right\Vert _{-1}^{2}ds<C,
\label{margA}
\end{equation}%
for some constant $C$ independent of $\varepsilon $ and $\lambda $.\bigskip
\end{lemma}

\begin{lemma}
\label{lema2}Under the assumptions above, we have the following strong
convergences in $\varepsilon $ for the couple $\left( Y,Z\right) $
\begin{equation*}
\begin{aligned}
Z_{s}^{\lambda ,\varepsilon }&\rightarrow Z_{s}^{\lambda }\text{ strongly
in }L^{2}\left( \left( 0,T\right) \mathcal{\times }\Omega ; L_{2}\left(
L^{2}\left( \mathcal{O}\right) ,H^{-1}\left( \mathcal{O}\right) \right)
\right);\\
Y_{\tau }^{\lambda ,\varepsilon }&\rightarrow Y_{\tau }^{\lambda }\text{
strongly in }L^{2}\left( \Omega ;C\left( \left[ 0,T\right] ;H^{-1}\left(
\mathcal{O}\right) \right) \right) .
\end{aligned}
\end{equation*}
\end{lemma}

\bigskip

At this point, in order to get (\ref{margU}) we need to pass to the limit
for $\varepsilon \rightarrow 0$ in (\ref{margA}) .

\begin{lemma}
\label{lema3}Under the assumptions above, we have
\begin{equation}
\mathbb{E}\int_{t}^{T}\left\Vert A_{\lambda }\left( Y_{s}^{\lambda }\right)
\right\Vert _{-1}^{2}ds<C,  \label{margAlambda}
\end{equation}
for some constant $C$ independent of $\varepsilon $ and $\lambda $.\bigskip
\end{lemma}

\bigskip

On the other hand we have that%
\begin{equation*}
\mathbb{E}\left[\int_{t}^{T}\left\Vert h\left( Y_{s}^{\lambda },Z_{s}^{\lambda
}\right) \right\Vert _{-1}^{2}ds\right] \leq K^2\mathbb{E}\left[\int_{t}^{T}\left( \rho \left( Y^{\lambda }\right)
+\left\Vert Z^{\lambda }\right\Vert _{L_{2}\left( L^{2}\left( \mathcal{O}%
\right) ;H^{-1}\left( \mathcal{O}\right) \right) }\right)^2 ds\right]\leq C.
\end{equation*}

We can conclude now that
\begin{equation*}
\begin{aligned}
\mathbb{E}\left[\int_{t}^{T}\left\Vert U_{\lambda }\left( Y_{s}^{\lambda
},Z_{s}^{\lambda }\right) \right\Vert _{-1}^{2}ds\right] \leq& 2\mathbb{E}
\left[\int_{t}^{T}\left\Vert A_{\lambda }\left( Y_{s}^{\lambda }\right)
\right\Vert _{-1}^{2}ds\right]+2\mathbb{E}\left[\int_{t}^{T}\left\Vert h\left( Y^{\lambda
},Z^{\lambda }\right) \right\Vert _{-1}^{2}ds\right]\\
\leq& C,
\end{aligned}
\end{equation*}
which implies that $U_{\lambda }\left( Y_{s}^{\lambda },Z_{s}^{\lambda
}\right) $ converges in a weak sense to some function $\delta \in L^{2}\left( \left(
0,T\right) \times \Omega ;H^{-1}\left( \mathcal{O}\right) \right) $, i.e.
\begin{equation*}
U_{\lambda }\left( Y_{s}^{\lambda },Z_{s}^{\lambda }\right) \rightharpoonup \delta \text{
weakly in }L^{2}\left( \left( 0,T\right) \times \Omega ;H^{-1}\left(
\mathcal{O}\right) \right) .
\end{equation*}

Since $U_{\lambda }\left( y,z\right) =\Delta \Psi _{\lambda }\left( y\right)
+h\left( y,z\right) $ we have by the uniqueness of the limit that%
\begin{equation*}
U_{\lambda }\left( Y_{s}^{\lambda }, Z_{s}^{\lambda }\right) \rightharpoonup \Delta \eta +\phi
\text{ weakly in }L^{2}\left( \left( 0,T\right) \times \Omega ;H^{-1}\left(
\mathcal{O}\right) \right) .
\end{equation*}

\bigskip

\textbf{Step II (strong convergences \textbf{of }$Y_{t}^{\lambda }$\textbf{\
and }$Z_{s}^{\lambda }$\textbf{\ }in }$H^{-1}$\textbf{)}

\bigskip

For each $\lambda ,\ \eta >0$ we take the It\^{o} formula with $e^{\alpha
t}\left\Vert \cdot \right\Vert _{H^{-1}\left( \mathcal{O}\right) }^{2}$ for
the difference
\begin{equation}\label{dif}
\begin{aligned}
Y_{t}^{\lambda }-Y_{t}^{\eta } =&\int_{t}^{T}\left( \Delta \Psi _{\lambda
}\left( Y_{s}^{\lambda }\right) +h\left( Y_{s}^{\lambda },Z_{s}^{\lambda
}\right) \right) ds-\int_{t}^{T}\left( \Delta \Psi _{\eta }\left( Y_{s}^{\eta }\right)
+h\left( Y_{s}^{\eta },Z_{s}^{\eta }\right) \right) ds  \\
&-\int_{t}^{T}Z_{s}^{\lambda }dW_{s}+\int_{t}^{T}Z_{s}^{\eta }dW_{s},
\end{aligned}
\end{equation}
and we get%

\begin{equation}\label{Ito2}
\begin{aligned}
&\mathbb{E}\left[e^{\alpha t}\left\Vert Y_{t}^{\lambda }-Y_{t}^{\eta }\right\Vert
_{H^{-1}\left( \mathcal{O}\right) }^{2}\right]+\alpha \mathbb{E}\left[\int_{t}^{T}e^{%
\alpha s}\left\Vert Y_{s}^{\lambda }-Y_{s}^{\eta }\right\Vert _{H^{-1}\left(
\mathcal{O}\right) }^{2}ds\right]  \\
=&-2\mathbb{E}\left[\int_{t}^{T}e^{\alpha s}\left\langle \Psi _{\lambda }\left(
Y_{s}^{\lambda }\right) -\Psi _{\eta }\left( Y_{s}^{\eta }\right)
,Y_{s}^{\lambda }-Y_{s}^{\eta }\right\rangle _{L^{2}\left( \mathcal{O}%
\right) }ds\right]  \\
&+2\mathbb{E}\left[\int_{t}^{T}e^{\alpha s}\left\langle h\left( Y_{s}^{\lambda
},Z_{s}^{\lambda }\right) -h\left( Y_{s}^{\eta },Z_{s}^{\eta }\right)
,Y_{s}^{\lambda }-Y_{s}^{\eta }\right\rangle _{H^{-1}\left( \mathcal{O}%
\right) }ds\right]  \\
&-\mathbb{E}\left[\int_{t}^{T}e^{\alpha s}\left\Vert Z_{s}^{\lambda }-Z_{s}^{\eta
}\right\Vert _{L_{2}\left( L^{2}\left( \mathcal{O}\right) ,H^{-1}\left(
\mathcal{O}\right) \right) }^{2}ds\right].
\end{aligned}
\end{equation}
We compute%
\begin{equation}\label{calc1}
\begin{aligned}
&-2\mathbb{E}\left[\int_{t}^{T}e^{\alpha s}\left\langle \Psi _{\lambda }\left(
Y_{s}^{\lambda }\right) -\Psi _{\eta }\left( Y_{s}^{\eta }\right)
,Y_{s}^{\lambda }-Y_{s}^{\eta }\right\rangle _{L^{2}\left( \mathcal{O}%
\right) }ds \right] \\
=&-2\mathbb{E}\left[\int_{t}^{T}e^{\alpha s}\left\langle \Psi _{\lambda }\left(
Y_{s}^{\lambda }\right) -\Psi _{\eta }\left( Y_{s}^{\eta }\right) ,\lambda
\Psi _{\lambda }\left( Y_{s}^{\lambda }\right) -\eta \Psi _{\eta }\left(
Y_{s}^{\eta }\right) \right\rangle _{L^{2}\left( \mathcal{O}\right) }ds\right]
\\
&-2\mathbb{E}\left[\int_{t}^{T}e^{\alpha s}\left\langle \Psi \left( J_{\lambda
}\left( Y_{s}^{\lambda }\right) \right) -\Psi \left( J_{\eta }\left(
Y_{s}^{\eta }\right) \right) ,J_{\lambda }\left( Y_{s}^{\lambda }\right)
-J_{\eta }\left( Y_{s}^{\eta }\right) \right\rangle _{L^{2}\left( \mathcal{O}
\right) }ds \right] \\
\leq& C\left( \lambda +\eta \right)\mathbb{E}\left[ \int_{t}^{T}e^{\alpha s}\left(
\left\Vert \Psi _{\lambda }\left( Y_{s}^{\lambda }\right) \right\Vert
_{L^{2}\left( \mathcal{O}\right) }^{2}+\left\Vert \Psi _{\eta }\left(
Y_{s}^{\eta }\right) \right\Vert _{L^{2}\left( \mathcal{O}\right)
}^{2}\right) ds\right],
\end{aligned}
\end{equation}
and%
\begin{equation}\label{calc2}
\begin{aligned}
&2\int_{t}^{T}e^{\alpha s}\left\langle h\left( Y_{s}^{\lambda
},Z_{s}^{\lambda }\right) -h\left( Y_{s}^{\eta },Z_{s}^{\eta }\right)
,Y_{s}^{\lambda }-Y_{t}^{\eta }\right\rangle _{H^{-1}\left( \mathcal{O}%
\right) }ds  \\
\leq&2\int_{t}^{T}\rho \left( Y_{s}^{\lambda }\right) e^{\alpha
s}\left\Vert Y_{s}^{\lambda }-Y_{s}^{\eta }\right\Vert _{H^{-1}\left(
\mathcal{O}\right) }^{2}ds  \\
&+2\int_{t}^{T}\rho \left( Y_{s}^{\lambda }\right) e^{\alpha s}\left\Vert
Y_{s}^{\lambda }-Y_{t}^{\eta }\right\Vert _{H^{-1}\left( \mathcal{O}\right)
}\left\Vert Z_{s}^{\lambda }-Z_{s}^{\eta }\right\Vert _{L_{2}\left(
L^{2}\left( \mathcal{O}\right) ;H^{-1}\left( \mathcal{O}\right) \right) }ds
\\
\leq& 3\int_{t}^{T}\rho \left( Y_{s}^{\lambda }\right) e^{\alpha
s}\left\Vert Y_{s}^{\lambda }-Y_{s}^{\eta }\right\Vert _{H^{-1}\left(
\mathcal{O}\right) }^{2}ds+\int_{t}^{T}\rho \left( Y_{s}^{\lambda }\right) e^{\alpha s}\left\Vert
Z_{s}^{\lambda }-Z_{s}^{\eta }\right\Vert _{L_{2}\left( L^{2}\left( \mathcal{%
O}\right) ;H^{-1}\left( \mathcal{O}\right) \right) }^{2}ds.
\end{aligned}
\end{equation}

Going back to (\ref{Ito2}) we get that%
\begin{equation*}
\begin{aligned}
&\mathbb{E}\left[ e^{\alpha t}\left\Vert Y_{t}^{\lambda }-Y_{t}^{\eta
}\right\Vert _{H^{-1}\left( \mathcal{O}\right) }^{2}\right]+\mathbb{E}\int_{t}^{T}\left( \alpha -3\rho \left( Y_{s}^{\lambda }\right)
\right) e^{\alpha s}\left\Vert Y_{s}^{\lambda }-Y_{s}^{\eta }\right\Vert
_{H^{-1}\left( \mathcal{O}\right) }^{2}ds \\
&+\mathbb{E}\int_{t}^{T}\left( 1-\rho \left( Y_{s}^{\lambda }\right)
\right) e^{\alpha s}\left\Vert Z_{s}^{\lambda }-Z_{s}^{\eta }\right\Vert
_{L_{2}\left( L^{2}\left( \mathcal{O}\right) ,H^{-1}\left( \mathcal{O}%
\right) \right) }^{2}ds \\
\leq&C\left( \lambda +\eta \right) \int_{t}^{T}e^{\alpha s}\left(
\left\Vert \Psi _{\lambda }\left( Y_{s}^{\lambda }\right) \right\Vert
_{L^{2}\left( \mathcal{O}\right) }^{2}+\left\Vert \Psi _{\eta }\left(
Y_{s}^{\eta }\right) \right\Vert _{L^{2}\left( \mathcal{O}\right)
}^{2}\right) ds.
\end{aligned}
\end{equation*}

Since $\rho $ is bounded and smaller than $1$ and $\alpha $ is assumed to
be sufficiently large we get from (\ref{margAlambda}) that
\begin{equation*}
\begin{aligned}
&\mathbb{E}\left[ e^{\alpha t}\left\Vert Y_{t}^{\lambda }-Y_{t}^{\eta
}\right\Vert _{H^{-1}\left( \mathcal{O}\right) }^{2}\right]+\mathbb{E}\int_{t}^{T}e^{\alpha s}\left\Vert Z_{s}^{\lambda ,\varepsilon
}-Z_{s}^{\lambda ,\eta }\right\Vert _{L_{2}\left( L^{2}\left( \mathcal{O}%
\right) ,H^{-1}\left( \mathcal{O}\right) \right) }^{2}ds \\
\leq C\left( \varepsilon +\eta \right) .
\end{aligned}
\end{equation*}
We obtained that%
\begin{equation}
Z_{s}^{\lambda }\longrightarrow Z_{s}\text{ strongly in }L^{2}\left( \left(
0,T\right) \mathcal{\times }\Omega ; L_{2}\left( L^{2}\left( \mathcal{O}%
\right) ,H^{-1}\left( \mathcal{O}\right) \right) \right) .
\label{conv-tare-Z}
\end{equation}

By applying again the It\^{o} formula to (\ref{dif}) with the supremum
before the expectation we get that%

\begin{equation}\label{Ito3}
\begin{aligned}
&\mathbb{E}\left[ \underset{\tau \in \left[ t,T\right] }{\sup }e^{\alpha
\tau }\left\Vert Y_{\tau }^{\lambda }-Y_{\tau }^{\eta }\right\Vert
_{H^{-1}\left( \mathcal{O}\right) }^{2}\right]  +\alpha \mathbb{E}\left[\int_{t}^{T}e^{\alpha s}\left\Vert Y_{s}^{\lambda
}-Y_{s}^{\eta }\right\Vert _{H^{-1}\left( \mathcal{O}\right) }^{2}ds\right]
\\
=&-2\mathbb{E}\left[\int_{t}^{T}e^{\alpha s}\left\langle \Psi _{\lambda }\left(
Y_{s}^{\lambda }\right) -\Psi _{\eta }\left( Y_{s}^{\eta }\right)
,Y_{s}^{\lambda }-Y_{s}^{\eta }\right\rangle _{L^{2}\left( \mathcal{O}
\right) }ds \right] \\
&+2\mathbb{E}\left[\int_{t}^{T}e^{\alpha s}\left\langle h\left( Y_{s}^{\lambda
},Z_{s}^{\lambda }\right) -h\left( Y_{s}^{\eta },Z_{s}^{\eta }\right)
,Y_{s}^{\lambda }-Y_{s}^{\eta }\right\rangle _{H^{-1}\left( \mathcal{O}%
\right) }ds \right]\\
&-2\mathbb{E}\left[ \underset{\tau \in \left[ t,T\right] }{\sup }\int_{\tau
}^{T}e^{\alpha s}\left\langle Y_{s}^{\lambda }-Y_{s}^{\eta },\left(
Z_{s}^{\lambda }-Z_{s}^{\eta }\right) dW_{s}\right\rangle _{H^{-1}\left(
\mathcal{O}\right) }\right] \\
&-\mathbb{E}\left[\int_{t}^{T}e^{\alpha s}\left\Vert Z_{s}^{\lambda }-Z_{s}^{\eta
}\right\Vert _{L_{2}\left( L^{2}\left( \mathcal{O}\right) ,H^{-1}\left(
\mathcal{O}\right) \right) }^{2}ds\right].
\end{aligned}
\end{equation}

By the Burkholder-Davis-Gundy inequality we have that%

\begin{equation*}
\begin{aligned}
&\mathbb{E}\left[ \underset{\tau \in \left[ t,T\right] }{\sup }e^{\alpha
\tau }\left\vert \int_{\tau }^{T}\left\langle Y_{s}^{\lambda
}-Y_{s}^{\eta },\left( Z_{s}^{\lambda  }-Z_{s}^{\eta }\right) dW_{s}\right\rangle _{H^{-1}\left( \mathcal{O}\right)
}\right\vert \right] \\
\leq& C\mathbb{E}\left[ \int_{t}^{T}e^{\alpha s}\left\Vert Y_{s}^{\lambda
}-Y_{s}^{\eta }\right\Vert _{H^{-1}\left( \mathcal{O}%
\right) }^{2}\left\Vert Z_{s}^{\lambda }-Z_{s}^{\eta
}\right\Vert _{L_{2}\left( L^{2}\left( \mathcal{O}\right) ,H^{-1}\left(
\mathcal{O}\right) \right) }^{2}ds\right] ^{1/2} \\
\leq& C\left\{ \frac{1}{2C}\mathbb{E}[\underset{\tau \in \left[ t,T\right] }{
\sup }e^{\alpha \tau }\left\Vert Y_{s}^{\lambda
}-Y_{s}^{\eta }\right\Vert _{H^{-1}\left( \mathcal{O}\right)
}^{2}]\right.\\
&+\left.\frac{C}{2}\mathbb{E}\left[\int_{t}^{T}e^{\alpha s}\left\Vert
Z_{s}^{\lambda}-Z_{s}^{\eta }\right\Vert
_{L_{2}\left( L^{2}\left( \mathcal{O}\right) ,H^{-1}\left( \mathcal{O}%
\right) \right) }^{2}ds\right]\right\} .
\end{aligned}
\end{equation*}

Going back to (\ref{Ito3}) and using the arguments from (\ref{calc1}) and (%
\ref{calc2}) we get that%
\begin{equation*}
\begin{aligned}
&\frac{1}{2}\mathbb{E}\left[ \underset{\tau \in \left[ t,T\right] }{\sup }%
e^{\alpha \tau }\left\Vert Y_{\tau }^{\lambda }-Y_{\tau }^{\eta }\right\Vert
_{H^{-1}\left( \mathcal{O}\right) }^{2}\right]+\left( \alpha -3\rho \left( Y_{s}^{\lambda }\right) \right) \mathbb{E}
\int_{t}^{T}e^{\alpha s}\left\Vert Y_{s}^{\lambda }-Y_{s}^{\eta }\right\Vert
_{H^{-1}\left( \mathcal{O}\right) }^{2}ds \\
&+\mathbb{E}\int_{t}^{T}\left( 1-\rho \left( Y_{s}^{\lambda }\right)
\right) e^{\alpha s}\left\Vert Z_{s}^{\lambda }-Z_{s}^{\eta }\right\Vert
_{L_{2}\left( L^{2}\left( \mathcal{O}\right) ,H^{-1}\left( \mathcal{O}%
\right) \right) }^{2}ds \\
\leq &C\left( \lambda +\eta \right) \int_{t}^{T}e^{\alpha s}\left(
\left\Vert \Psi _{\lambda }\left( Y_{s}^{\lambda }\right) \right\Vert
_{L^{2}\left( \mathcal{O}\right) }^{2}+\left\Vert \Psi _{\eta }\left(
Y_{s}^{\eta }\right) \right\Vert _{L^{2}\left( \mathcal{O}\right)
}^{2}\right) ds \\
&+\frac{C^{2}}{2}\mathbb{E}\int_{t}^{T}e^{\alpha s}\left\Vert
Z_{s}^{\lambda  }-Z_{s}^{\eta }\right\Vert
_{L_{2}\left( L^{2}\left( \mathcal{O}\right) ,H^{-1}\left( \mathcal{O}%
\right) \right) }^{2}ds.
\end{aligned}
\end{equation*}

Finally, by using again (\ref{margAlambda}) and combining with (\ref%
{conv-tare-Z}) we get that%
\begin{equation}
Y^{\lambda }\rightarrow Y\text{ strongly in }L^{2}\left( \Omega ;C\left( %
\left[ 0,T\right] ;H^{-1}\left( \mathcal{O}\right) \right) \right),
\label{conv-Y-tare}
\end{equation}

and we keep in mind that we have%
\begin{equation*}
Z^{\lambda }\rightarrow Z\text{ strongly in }L^{2}\left( \left( 0,T\right)
\mathcal{\times }\Omega ; L_{2}\left( L^{2}\left( \mathcal{O}\right)
,H^{-1}\left( \mathcal{O}\right) \right) \right) .
\end{equation*}

\bigskip

\textbf{Step III (identification of }$\Psi \left( Y\right) =\eta $ and $%
h\left( Y,Z\right) =\phi $\textbf{)}

\bigskip

In order to get the identification, we define%
\begin{equation*}
r_{t}=2\int_{t}^{T}\left( \rho \left( v_{s}\right) +\rho ^{2}\left(
v_{s}\right) \right) ds,
\end{equation*}%
for $v\in L_{\mathcal{F}}^{\infty }\left( \Omega \times \left[ 0,T\right]
;V\right) $.

We apply the It\^{o} formula to%
\begin{equation*}
Y_{t}^{\lambda }=\xi +\int_{t}^{T}U_{\lambda }\left( Y_{s}^{\lambda
},Z_{s}^{\lambda }\right) ds-\int_{t}^{T}Z_{s}^{\lambda }dW_{s},
\end{equation*}%
and we get%
\begin{equation}\label{Ito-iden}
\begin{aligned}
&\mathbb{E}\left[ e^{r_{t}}\left\Vert Y_{t}^{\lambda }\right\Vert
_{H^{-1}\left( \mathcal{O}\right) }^{2}-e^{r_{T}}\left\Vert \xi \right\Vert
_{H^{-1}\left( \mathcal{O}\right) }^{2}\right]  \\
=&2\mathbb{E}\int_{t}^{T}e^{r_{s}}\left\langle U_{\lambda }\left(
Y_{s}^{\lambda },Z_{s}^{\lambda }\right) ,Y_{s}^{\lambda }\right\rangle
_{H^{-1}\left( \mathcal{O}\right) }ds-\mathbb{E}\int_{t}^{T}e^{r_{s}}\left%
\Vert Z_{s}^{\lambda }\right\Vert _{L_{2}\left( L^{2}\left( \mathcal{O}%
\right) ,H^{-1}\left( \mathcal{O}\right) \right) }^{2}ds  \\
&-2\mathbb{E}\int_{t}^{T}e^{r_{t}}\left( \rho \left( v_{s}\right) +\rho
^{2}\left( v_{s}\right) \right) \left\Vert Y_{s}^{\lambda }\right\Vert
_{H^{-1}\left( \mathcal{O}\right) }^{2}ds.
\end{aligned}
\end{equation}

We rewrite%
\begin{equation}\label{UU}
\begin{aligned}
&2\mathbb{E}\int_{t}^{T}e^{r_{s}}\left\langle U_{\lambda }\left(
Y_{s}^{\lambda },Z_{s}^{\lambda }\right) ,Y_{s}^{\lambda }\right\rangle
_{H^{-1}\left( \mathcal{O}\right) }ds  \\
=\ &2\mathbb{E}\int_{t}^{T}e^{r_{s}}\left\langle \Delta \Psi _{\lambda
}\left( Y_{s}^{\lambda }\right) ,Y_{s}^{\lambda }\right\rangle
_{H^{-1}\left( \mathcal{O}\right) }ds+2\mathbb{E}\int_{t}^{T}e^{r_{s}}\left%
\langle h\left( Y_{s}^{\lambda },Z_{s}^{\lambda }\right) ,Y_{s}^{\lambda
}\right\rangle _{H^{-1}\left( \mathcal{O}\right) }ds \\
\overset{Note}{=}&I_{1}+I_{2}.
\end{aligned}
\end{equation}

We have%
\begin{equation*}
\begin{aligned}
I_{1} =&2\mathbb{E}\int_{t}^{T}e^{r_{s}}\left\langle \Delta \Psi _{\lambda
}\left( Y_{s}^{\lambda }\right) ,Y_{s}^{\lambda }\right\rangle
_{H^{-1}\left( \mathcal{O}\right) }ds \\
=&2\mathbb{E}\int_{t}^{T}e^{r_{s}}\left\langle \Delta \Psi _{\lambda
}\left( Y_{s}^{\lambda }\right) ,Y_{s}^{\lambda }-J_{\lambda }\left(
Y_{s}^{\lambda }\right) +J_{\lambda }\left( Y_{s}^{\lambda }\right)
\right\rangle _{H^{-1}\left( \mathcal{O}\right) }ds \\
=&2\lambda \mathbb{E}\int_{t}^{T}e^{r_{s}}\left\langle \Delta \Psi
_{\lambda }\left( Y_{s}^{\lambda }\right) ,\Psi _{\lambda }\left(
Y_{s}^{\lambda }\right) \right\rangle _{H^{-1}\left( \mathcal{O}\right) }ds
\\
&+2\mathbb{E}\int_{t}^{T}e^{r_{s}}\left\langle \Delta \Psi _{\lambda
}\left( Y_{s}^{\lambda }\right) ,J_{\lambda }\left( Y_{s}^{\lambda }\right)
\right\rangle _{H^{-1}\left( \mathcal{O}\right) }ds \\
\leq &2\mathbb{E}\int_{t}^{T}e^{r_{s}}\left\langle \Delta \Psi _{\lambda
}\left( Y_{s}^{\lambda }\right) ,J_{\lambda }\left( Y_{s}^{\lambda }\right)
\right\rangle _{H^{-1}\left( \mathcal{O}\right) }ds\\
\leq &2\mathbb{E}\int_{t}^{T}e^{r_{s}}\left\langle \Delta \Psi
_{\lambda }\left( Y_{s}^{\lambda }\right) -\Delta \Psi \left( v_{s}\right)
+\Delta \Psi \left( v_{s}\right) ,J_{\lambda }\left( Y_{s}^{\lambda }\right)
\right\rangle _{H^{-1}\left( \mathcal{O}\right) }ds \\
=&2\mathbb{E}\int_{t}^{T}e^{r_{s}}\left\langle \Delta \Psi \left(
J_{\lambda }\left( Y_{s}^{\lambda }\right) \right) -\Delta \Psi \left(
v_{s}\right) ,J_{\lambda }\left( Y_{s}^{\lambda }\right) \right\rangle
_{H^{-1}\left( \mathcal{O}\right) }ds \\
&+2\mathbb{E}\int_{t}^{T}e^{r_{s}}\left\langle \Delta \Psi \left(
v_{s}\right) ,J_{\lambda }\left( Y_{s}^{\lambda }\right) \right\rangle
_{H^{-1}\left( \mathcal{O}\right) }ds\\
=&2\mathbb{E}\int_{t}^{T}e^{r_{s}}\left\langle \Delta \Psi \left(
J_{\lambda }\left( Y_{s}^{\lambda }\right) \right) -\Delta \Psi \left(
v_{s}\right) ,J_{\lambda }\left( Y_{s}^{\lambda }\right) -v_{s}\right\rangle
_{H^{-1}\left( \mathcal{O}\right) }ds \\
&+2\mathbb{E}\int_{t}^{T}e^{r_{s}}\left\langle \Delta \Psi \left(
J_{\lambda }\left( Y_{s}^{\lambda }\right) \right) -\Delta \Psi \left(
v_{s}\right) ,v_{s}\right\rangle _{H^{-1}\left( \mathcal{O}\right) }ds \\
&+2\mathbb{E}\int_{t}^{T}e^{r_{s}}\left\langle \Delta \Psi \left( v\right)
,J_{\lambda }\left( Y_{s}^{\lambda }\right) \right\rangle _{H^{-1}\left(
\mathcal{O}\right) }ds \\
\leq &2\mathbb{E}\int_{t}^{T}e^{r_{s}}\left\langle \Delta \Psi \left(
J_{\lambda }\left( Y_{s}^{\lambda }\right) \right) -\Delta \Psi \left(
v_{s}\right) ,v_{s}\right\rangle _{H^{-1}\left( \mathcal{O}\right) }ds \\
&+2\mathbb{E}\int_{t}^{T}e^{r_{s}}\left\langle \Delta \Psi \left( v\right)
,J_{\lambda }\left( Y_{s}^{\lambda }\right) \right\rangle _{H^{-1}\left(
\mathcal{O}\right) }ds.
\end{aligned}
\end{equation*}

On the other hand we have%
\begin{equation*}
\begin{aligned}
I_{2} =&2\mathbb{E}\int_{t}^{T}e^{r_{s}}\left\langle h\left( Y_{s}^{\lambda
},Z_{s}^{\lambda }\right) ,Y_{s}^{\lambda }\right\rangle _{H^{-1}\left(
\mathcal{O}\right) }ds \\
=&2\mathbb{E}\int_{t}^{T}e^{r_{s}}\left\langle h\left( Y_{s}^{\lambda
},Z_{s}^{\lambda }\right) ,Y_{s}^{\lambda }-v_{s}\right\rangle
_{H^{-1}\left( \mathcal{O}\right) }ds
 \\
&+2\mathbb{E}\int_{t}^{T}e^{r_{s}}\left%
\langle h\left( Y_{s}^{\lambda },Z_{s}^{\lambda }\right) ,v_{s}\right\rangle
_{H^{-1}\left( \mathcal{O}\right) }ds\\
=&2\mathbb{E}\int_{t}^{T}e^{r_{s}}\left\langle h\left( Y_{s}^{\lambda
},Z_{s}^{\lambda }\right) -h\left( v_{s},Z_{s}\right) ,Y_{s}^{\lambda
}-v_{s}\right\rangle _{H^{-1}\left( \mathcal{O}\right) }ds \\
&+2\mathbb{E}\int_{t}^{T}e^{r_{s}}\left\langle h\left( v_{s},Z_{s}\right)
,Y_{s}^{\lambda }-v_{s}\right\rangle _{H^{-1}\left( \mathcal{O}\right) }ds\\
&+ 2\mathbb{E}\int_{t}^{T}e^{r_{s}}\left\langle h\left( Y_{s}^{\lambda
},Z_{s}^{\lambda }\right) ,v_{s}\right\rangle _{H^{-1}\left( \mathcal{O}%
\right) }ds.
\end{aligned}
\end{equation*}
We continue with
\begin{equation*}
\begin{aligned}
I_2=&2\mathbb{E}\int_{t}^{T}e^{r_{s}}\left\langle h\left( Y_{s}^{\lambda
},Z_{s}^{\lambda }\right) -h\left( v_{s},Z_{s}\right) ,Y_{s}^{\lambda
}-v_{s}\right\rangle _{H^{-1}\left( \mathcal{O}\right) }ds \\
&+2\mathbb{E}\int_{t}^{T}e^{r_{s}}\left\langle h\left( Y_{s}^{\lambda
},Z_{s}^{\lambda }\right) -h\left( v_{s},Z_{s}\right) ,v_{s}\right\rangle
_{H^{-1}\left( \mathcal{O}\right) }ds\\
&+2\mathbb{E}\int_{t}^{T}e^{r_{s}}\left
\langle h\left( v_{s},Z_{s}\right) ,Y_{s}^{\lambda }\right\rangle
_{H^{-1}\left( \mathcal{O}\right) }ds\\
\leq &2\mathbb{E}\int_{t}^{T}e^{r_{s}}\left( \rho \left( v_{s}\right)
\left\Vert Y_{s}^{\lambda }-v_{s}\right\Vert _{H^{-1}\left( \mathcal{O}%
\right) }^{2}\right)ds
\\
&+2\mathbb{E}\int_{t}^{T}e^{r_{s}}\left(\rho \left( v_{s}\right) \left\Vert Y_{s}^{\lambda
}-v_{s}\right\Vert _{H^{-1}\left( \mathcal{O}\right) }\left\Vert
Z_{s}^{\lambda }-Z_{s}\right\Vert _{L_{2}\left( L^{2}\left( \mathcal{O}%
\right) ,H^{-1}\left( \mathcal{O}\right) \right) }\right) ds \\
&+2\mathbb{E}\int_{t}^{T}e^{r_{s}}\left\langle h\left( Y_{s}^{\lambda
},Z_{s}^{\lambda }\right) -h\left( v_{s},Z_{s}\right) ,v_{s}\right\rangle
_{H^{-1}\left( \mathcal{O}\right) }ds\\
&+2\mathbb{E}\int_{t}^{T}e^{r_{s}}\left%
\langle h\left( v_{s},Z_{s}\right) ,Y_{s}^{\lambda }\right\rangle
_{H^{-1}\left( \mathcal{O}\right) }ds\\
\leq &2\mathbb{E}\int_{t}^{T}e^{r_{s}}\left( \left( \rho \left(
v_{s}\right) +\rho ^{2}\left( v_{s}\right) \right) \left\Vert Y_{s}^{\lambda
}-v_{s}\right\Vert _{H^{-1}\left( \mathcal{O}\right) }^{2}+\left\Vert
Z_{s}^{\lambda }-Z_{s}\right\Vert _{L_{2}\left( L^{2}\left( \mathcal{O}%
\right) ,H^{-1}\left( \mathcal{O}\right) \right) }^{2}\right) ds \\
&+2\mathbb{E}\int_{t}^{T}e^{r_{s}}\left\langle h\left( Y_{s}^{\lambda
},Z_{s}^{\lambda }\right) -h\left( v_{s},Z_{s}\right) ,v_{s}\right\rangle
_{H^{-1}\left( \mathcal{O}\right) }ds\\
&+2\mathbb{E}\int_{t}^{T}e^{r_{s}}\left%
\langle h\left( v_{s},Z_{s}\right) ,Y_{s}^{\lambda }\right\rangle
_{H^{-1}\left( \mathcal{O}\right) }ds.
\end{aligned}
\end{equation*}

Going back to (\ref{UU}) we obtain%
\begin{equation*}
\begin{aligned}
&2\mathbb{E}\int_{t}^{T}e^{r_{s}}\left\langle U_{\lambda }\left(
Y_{s}^{\lambda },Z_{s}^{\lambda }\right) ,Y_{s}^{\lambda }\right\rangle
_{H^{-1}\left( \mathcal{O}\right) }ds \\
\leq &2\mathbb{E}\int_{t}^{T}e^{r_{s}}\left\langle \Delta \Psi \left(
J_{\lambda }\left( Y_{s}^{\lambda }\right) \right) -\Delta \Psi \left(
v_s\right) ,v_S\right\rangle _{H^{-1}\left( \mathcal{O}\right) }ds \\
&+2\mathbb{E}\int_{t}^{T}e^{r_{s}}\left\langle \Delta \Psi \left(
v_{s}\right) ,J_{\lambda }\left( Y_{s}^{\lambda }\right) \right\rangle
_{H^{-1}\left( \mathcal{O}\right) }ds \\
&+2\mathbb{E}\int_{t}^{T}e^{r_{s}}\left( \left( \rho \left( v_{s}\right)
+\rho ^{2}\left( v_{s}\right) \right) \left\Vert Y_{s}^{\lambda
}-v_{s}\right\Vert _{H^{-1}\left( \mathcal{O}\right) }^{2}+\left\Vert
Z_{s}^{\lambda }-Z_{s}\right\Vert _{L_{2}\left( L^{2}\left( \mathcal{O}%
\right) ,H^{-1}\left( \mathcal{O}\right) \right) }^{2}\right) ds \\
&+2\mathbb{E}\int_{t}^{T}e^{r_{s}}\left\langle h\left( Y_{s}^{\lambda
},Z_{s}^{\lambda }\right) -h\left( v_{s},Z_{s}\right) ,v_{s}\right\rangle
_{H^{-1}\left( \mathcal{O}\right) }ds+2\mathbb{E}\int_{t}^{T}e^{r_{s}}\left%
\langle h\left( v_{s},Z_{s}\right) ,Y_{s}^{\lambda }\right\rangle
_{H^{-1}\left( \mathcal{O}\right) }ds\\
=&2\mathbb{E}\int_{t}^{T}e^{r_{s}}\left\langle U_{\lambda }\left(
Y_{s}^{\lambda },Z_{s}^{\lambda }\right) -U\left( v_{s},Z_{s}\right)
,v_{s}\right\rangle _{H^{-1}\left( \mathcal{O}\right) }ds \\
&+2\mathbb{E}\int_{t}^{T}e^{r_{s}}\left\langle \Delta \Psi \left(
v_{s}\right) ,J_{\lambda }\left( Y_{s}^{\lambda }\right) \right\rangle
_{H^{-1}\left( \mathcal{O}\right) }ds \\
&+2\mathbb{E}\int_{t}^{T}e^{r_{s}}\left( \left( \rho \left( v_{s}\right)
+\rho ^{2}\left( v_{s}\right) \right) \left\Vert Y_{s}^{\lambda
}-v_{s}\right\Vert _{H^{-1}\left( \mathcal{O}\right) }^{2}+\left\Vert
Z_{s}^{\lambda }-Z_{s}\right\Vert _{L_{2}\left( L^{2}\left( \mathcal{O}%
\right) ,H^{-1}\left( \mathcal{O}\right) \right) }^{2}\right) ds \\
&+2\mathbb{E}\int_{t}^{T}e^{r_{s}}\left\langle h\left( v_{s},Z_{s}\right)
,Y_{s}^{\lambda }\right\rangle _{H^{-1}\left( \mathcal{O}\right) }ds.
\end{aligned}
\end{equation*}

We replace in (\ref{Ito-iden}) and we obtain that%
\begin{equation*}
\begin{aligned}
&\mathbb{E}\left[ e^{r_{t}}\left\Vert Y_{t}^{\lambda }\right\Vert
_{H^{-1}\left( \mathcal{O}\right) }^{2}-e^{r_{T}}\left\Vert \xi \right\Vert
_{H^{-1}\left( \mathcal{O}\right) }^{2}\right]  \\
\leq &2\mathbb{E}\int_{t}^{T}e^{r_{s}}\left\langle U_{\lambda }\left(
Y_{s}^{\lambda },Z_{s}^{\lambda }\right) -U\left( v_{s},Z_{s}\right)
,v_{s}\right\rangle _{H^{-1}\left( \mathcal{O}\right) }ds \\
&+2\mathbb{E}\int_{t}^{T}e^{r_{s}}\left\langle \Delta \Psi \left(
v_{s}\right) ,J_{\lambda }\left( Y_{s}^{\lambda }\right) \right\rangle
_{H^{-1}\left( \mathcal{O}\right) }ds \\
&+2\mathbb{E}\int_{t}^{T}e^{r_{s}}\left( \left( \rho \left( v_{s}\right)
+\rho ^{2}\left( v_{s}\right) \right) \left\Vert Y_{s}^{\lambda
}-v_{s}\right\Vert _{H^{-1}\left( \mathcal{O}\right) }^{2}+\left\Vert
Z_{s}^{\lambda }-Z_{s}\right\Vert _{L_{2}\left( L^{2}\left( \mathcal{O}%
\right) ,H^{-1}\left( \mathcal{O}\right) \right) }^{2}\right) ds \\
&+2\mathbb{E}\int_{t}^{T}e^{r_{s}}\left\langle h\left( v_{s},Z_{s}\right)
,Y_{s}^{\lambda }\right\rangle _{H^{-1}\left( \mathcal{O}\right) }ds \\
&-\mathbb{E}\int_{t}^{T}e^{r_{s}}\left\Vert Z_{s}^{\lambda }\right\Vert
_{L_{2}\left( L^{2}\left( \mathcal{O}\right) ,H^{-1}\left( \mathcal{O}%
\right) \right) }^{2}ds-2\mathbb{E}\int_{t}^{T}e^{r_{s}}\left( \rho \left(
v_{s}\right) +\rho ^{2}\left( v_{s}\right) \right) \left\Vert Y_{s}^{\lambda
}\right\Vert _{H^{-1}\left( \mathcal{O}\right) }^{2}ds\\
\leq &2\mathbb{E}\int_{t}^{T}e^{r_{s}}\left\langle U_{\lambda }\left(
Y_{s}^{\lambda },Z_{s}^{\lambda }\right) -U\left( v_{s},Z_{s}\right)
,v_{s}\right\rangle _{H^{-1}\left( \mathcal{O}\right) }ds \\
&+2\mathbb{E}\int_{t}^{T}e^{r_{s}}\left\langle \Delta \Psi \left(
v_{s}\right) ,J_{\lambda }\left( Y_{s}^{\lambda }\right) \right\rangle
_{H^{-1}\left( \mathcal{O}\right) }ds \\
&+2\mathbb{E}\int_{t}^{T}e^{r_{s}}\left( \left( \rho \left( v_{s}\right)
+\rho ^{2}\left( v_{s}\right) \right) \left\Vert Y_{s}^{\lambda
}-v_{s}\right\Vert _{H^{-1}\left( \mathcal{O}\right) }^{2}+\left\Vert
Z_{s}^{\lambda }-Z_{s}\right\Vert _{L_{2}\left( L^{2}\left( \mathcal{O}%
\right) ,H^{-1}\left( \mathcal{O}\right) \right) }^{2}\right) ds \\
&+2\mathbb{E}\int_{t}^{T}e^{r_{s}}\left\langle h\left( v_{s},Z_{s}\right)
,Y_{s}^{\lambda }\right\rangle _{H^{-1}\left( \mathcal{O}\right) }ds-\mathbb{%
E}\int_{t}^{T}e^{r_{s}}\left\Vert Z_{s}^{\lambda }\right\Vert _{L_{2}\left(
L^{2}\left( \mathcal{O}\right) ,H^{-1}\left( \mathcal{O}\right) \right)
}^{2}ds \\
&-2\mathbb{E}\int_{t}^{T}e^{r_{s}}\left( \rho \left( v_{s}\right) +\rho
^{2}\left( v_{s}\right) \right) \left( \left\Vert Y_{s}^{\lambda
}-v_{s}\right\Vert _{H^{-1}\left( \mathcal{O}\right) }^{2}+2\left\langle
Y_{s}^{\lambda },v_{s}\right\rangle _{H^{-1}\left( \mathcal{O}\right)
}-\left\Vert v_{s}\right\Vert _{H^{-1}\left( \mathcal{O}\right) }^{2}\right)
ds.
\end{aligned}
\end{equation*}

Passing to the limit for $\lambda \rightarrow 0$ we get that%
\begin{equation}\label{limita}
\begin{aligned}
&\mathbb{E}\left[ e^{r_{t}}\left\Vert Y_{t}\right\Vert _{H^{-1}\left(
\mathcal{O}\right) }^{2}-e^{r_{T}}\left\Vert \xi \right\Vert _{H^{-1}\left(
\mathcal{O}\right) }^{2}\right]  \\
\leq &2\mathbb{E}\int_{t}^{T}e^{r_{s}}\left\langle \delta -U\left(
v_{s},Z_{s}\right) ,v_{s}\right\rangle _{H^{-1}\left( \mathcal{O}\right)
}ds+2\mathbb{E}\int_{t}^{T}e^{r_{s}}\left\langle \Delta \Psi \left(
v_{s}\right) ,Y_{s}\right\rangle _{H^{-1}\left( \mathcal{O}\right) }ds\\
&+2\mathbb{E}\int_{t}^{T}e^{r_{s}}\left\langle h\left( v_{s},Z_{s}\right)
,Y_{s}\right\rangle _{H^{-1}\left( \mathcal{O}\right) }ds-\mathbb{E}%
\int_{t}^{T}e^{r_{s}}\left\Vert Z_{s}\right\Vert _{L_{2}\left( L^{2}\left(
\mathcal{O}\right) ,H^{-1}\left( \mathcal{O}\right) \right) }^{2}ds
\\
&-2\mathbb{E}\int_{t}^{T}e^{r_{s}}\left( \rho \left( v_{s}\right) +\rho
^{2}\left( v_{s}\right) \right) \left( 2\left\langle
Y_{s},v_{s}\right\rangle _{H^{-1}\left( \mathcal{O}\right) }-\left\Vert
v_{s}\right\Vert _{H^{-1}\left( \mathcal{O}\right) }^{2}\right) ds.
\end{aligned}
\end{equation}

On the other hand, by applying the It\^{o} formula to the limit equation, we
get that%
\begin{equation}\label{ito-limita}
\begin{aligned}
&\mathbb{E}\left[ e^{r_{t}}\left\Vert Y_{t}\right\Vert _{H^{-1}\left(
\mathcal{O}\right) }^{2}-e^{r_{T}}\left\Vert \xi \right\Vert _{H^{-1}\left(
\mathcal{O}\right) }^{2}\right]  \\
=&2\mathbb{E}\int_{t}^{T}e^{r_{s}}\left\langle \Delta \eta
,Y_{s}\right\rangle _{H^{-1}\left( \mathcal{O}\right) }ds+2\mathbb{E}%
\int_{t}^{T}e^{r_{s}}\left\langle \phi ,Y_{s}\right\rangle _{H^{-1}\left(
\mathcal{O}\right) }ds \\
&-\mathbb{E}\int_{t}^{T}e^{r_{s}}\left\Vert Z_{s}\right\Vert _{L_{2}\left(
L^{2}\left( \mathcal{O}\right) ,H^{-1}\left( \mathcal{O}\right) \right)
}^{2}ds-2\mathbb{E}\int_{t}^{T}e^{r_{s}}\left( \rho \left( v_{s}\right) +\rho
^{2}\left( v_{s}\right) \right) \left\Vert Y_{s}\right\Vert _{H^{-1}\left(
\mathcal{O}\right) }^{2}ds.
\end{aligned}
\end{equation}

By combining (\ref{limita}) and (\ref{ito-limita}) we get that%
\begin{equation*}
\begin{aligned}
&2\mathbb{E}\int_{t}^{T}e^{r_{s}}\left\langle \delta -U\left(
v_{s},Z_{s}\right) ,Y_{s}-v_{s}\right\rangle _{H^{-1}\left( \mathcal{O}%
\right) }ds \\
\leq &2\mathbb{E}\int_{t}^{T}e^{r_{s}}\left( \rho \left( v_{s}\right) +\rho
^{2}\left( v_{s}\right) \right) \left\Vert Y_{s}-v_{s}\right\Vert
_{H^{-1}\left( \mathcal{O}\right) }^{2}ds.
\end{aligned}
\end{equation*}

Taking $v_{s}=Y_{s}-\varepsilon \varphi w$ for $\varepsilon >0$ with $%
\varphi \in L^{\infty }\left( \left( 0,T\right) \times \Omega ;\mathbb{R}%
\right) $ and $w\in L^{m+1}\left( \mathcal{O}\right) $ we get%
\begin{equation}\label{ultima-ineg}
\begin{aligned}
&2\mathbb{E}\int_{t}^{T}e^{r_{s}}\left\langle \delta -U\left(
Y_{s}-\varepsilon \varphi w,Z_{s}\right) ,\varepsilon \varphi w\right\rangle
_{H^{-1}\left( \mathcal{O}\right) }ds  \\
\leq &2\mathbb{E}\int_{t}^{T}e^{r_{s}}\left( \rho \left( Y_{s}-\varepsilon
\varphi w\right) +\rho ^{2}\left( Y_{s}-\varepsilon \varphi w\right) \right)
\left\Vert \varepsilon \varphi w\right\Vert _{H^{-1}\left( \mathcal{O}%
\right) }^{2}ds.
\end{aligned}
\end{equation}

Then we divide by $\varepsilon $ and we want to let $\varepsilon \rightarrow
0$. To this purpose we take
\begin{equation*}
U\left( Y_{s}-\varepsilon \varphi w,Z_{s}\right) =\Delta \Psi \left(
Y_{s}-\varepsilon \varphi w\right) +h\left( Y_{s}-\varepsilon \varphi
w,Z_{s}\right),
\end{equation*}%
and we passe to the limit in
\begin{equation}
2\mathbb{E}\int_{t}^{T}e^{r_{s}}\left\langle \delta -\Delta \Psi \left(
Y_{s}-\varepsilon \varphi w\right) -h\left( Y_{s}-\varepsilon \varphi
w,Z_{s}\right) ,\varphi w\right\rangle _{H^{-1}\left( \mathcal{O}\right) }ds.
\label{termen}
\end{equation}

For the first term
\begin{equation*}
\begin{aligned}
&2\mathbb{E}\int_{t}^{T}e^{r_{s}}\left\langle \delta -\Delta \Psi \left(
Y_{s}-\varepsilon \varphi w\right) ,\varphi w\right\rangle _{H^{-1}\left(
\mathcal{O}\right) }ds \\
=&2\mathbb{E}\int_{t}^{T}e^{r_{s}}\left\langle \left( -\Delta \right)
^{-1}\delta +\Psi \left( Y_{s}-\varepsilon \varphi w\right) ,\varphi
w\right\rangle _{L^{2}\left( \mathcal{O}\right) }ds.
\end{aligned}
\end{equation*}
We recall that $Y_{s}\in L^{m+1}\left( \Omega \times \left( 0,T\right)
\times \mathcal{O}\right) $ and $\left\{ \Psi \left( Y_{s}-\varepsilon
\varphi w\right) \right\} $ is bounded in $L^{\frac{m+1}{m}}\left( \Omega
\times \left( 0,T\right) \times \mathcal{O}\right) $.

Since
\begin{equation*}
\Psi :L^{m+1}\left( \Omega \times \left( 0,T\right) \times \mathcal{O}%
\right) \rightarrow L^{\frac{m+1}{m}}\left( \Omega \times \left( 0,T\right)
\times \mathcal{O}\right),
\end{equation*}%
is a maximal monotone operator and
\begin{eqnarray*}
Y_{s}-\varepsilon \varphi w &\rightarrow &Y_{s}\text{ strongly in }%
L^{m+1}\left( \Omega \times \left( 0,T\right) \times \mathcal{O}\right);  \\
\Psi \left( Y_{s}-\varepsilon \varphi w\right)  &\rightharpoonup &\varkappa
\text{ weakly in }L^{\frac{m+1}{m}}\left( \Omega \times \left( 0,T\right)
\times \mathcal{O}\right),
\end{eqnarray*}%
we can conclude that
\begin{equation*}
\Psi \left( Y_{s}-\varepsilon \varphi w\right) \rightarrow \Psi \left(
Y_{s}\right) \text{ weakly in }L^{\frac{m+1}{m}}\left( \Omega \times \left(
0,T\right) \times \mathcal{O}\right) .
\end{equation*}

Keeping in mind the assumptions on $h$ we can pass to the limit in (\ref%
{termen}) and we obtain
\begin{equation*}
2\mathbb{E}\int_{t}^{T}e^{r_{s}}\left\langle \delta -\Delta \Psi \left(
Y_{s}\right) -h\left( Y_{s},Z_{s}\right) ,\varphi w\right\rangle
_{H^{-1}\left( \mathcal{O}\right) }ds.
\end{equation*}

On the other hand, since the functions $\rho $\ are bounded, after letting $%
\varepsilon \rightarrow 0$ in (\ref{ultima-ineg})\ we get that%
\begin{equation*}
\mathbb{E}\int_{t}^{T}e^{r_{s}}\left\langle \delta -U\left(
Y_{s},Z_{s}\right) ,\varphi w\right\rangle _{H^{-1}\left( \mathcal{O}\right)
}ds\leq 0,
\end{equation*}%
which leads to $\delta =U\left( Y,Z\right) =\Delta \Psi \left( Y\right)
+h\left( Y,Z\right) $ since $\varphi $ and $w$ were chosen arbitrarily.
\end{proof}

\bigskip

\section{Appendix}

\bigskip

\begin{proof}[Proof of Lemma \protect\ref{lema1}]
Keeping in mind the assumption $c)$ and the estimate (\ref{estim-vechi}) we
can apply the It\^{o} formula in $L^{2}\left( \mathcal{O}\right) $ to the
equation%
\begin{equation}
Y_{t}^{\lambda ,\varepsilon }=\xi +\int_{t}^{T}\left( -\left( A_{\lambda
}\right) ^{\varepsilon }\left( Y_{s}^{\lambda ,\varepsilon }\right) +h\left(
Y_{s}^{\lambda ,\varepsilon },Z_{s}^{\lambda ,\varepsilon }\right) \right)
ds-\int_{t}^{T}Z_{s}^{\lambda ,\varepsilon }dW_{s} , \label{sol-epsilon}
\end{equation}%
and we get%
\begin{equation}\label{estim1}
\begin{aligned}
e^{\alpha t}\left\Vert Y_{t}^{\lambda ,\varepsilon }\right\Vert
_{L^{2}\left( \mathcal{O}\right) }^{2} =&e^{\alpha T}\left\Vert \xi
\right\Vert _{L^{2}\left( \mathcal{O}\right) }^{2}  \\
&-2\int_{t}^{T}e^{\alpha s}\left\langle \left( A_{\lambda }\right)
^{\varepsilon }\left( Y_{s}^{\lambda ,\varepsilon }\right) ,Y_{s}^{\lambda
,\varepsilon }\right\rangle _{L^{2}\left( \mathcal{O}\right) }ds  \\
&+2\int_{t}^{T}e^{\alpha s}\left\langle h\left( Y_{s}^{\lambda ,\varepsilon
},Z_{s}^{\lambda ,\varepsilon }\right) ,Y_{s}^{\lambda ,\varepsilon
}\right\rangle _{L^{2}\left( \mathcal{O}\right) }ds  \\
&-2\int_{t}^{T}e^{\alpha s}\left\langle Y_{s}^{\lambda ,\varepsilon
},Z_{s}^{\lambda ,\varepsilon }dW_{s}\right\rangle _{L^{2}\left( \mathcal{O}%
\right) }  \\
&-\int_{t}^{T}e^{\alpha s}\left\Vert Z_{s}^{\lambda ,\varepsilon
}\right\Vert _{L_{2}\left( L^{2}\left( \mathcal{O}\right) ,L^{2}\left(
\mathcal{O}\right) \right) }^{2}ds.
\end{aligned}
\end{equation}
From the equation of the resolvent%
\begin{equation*}
J_{\varepsilon }\left( Y_{s}^{\lambda ,\varepsilon }\right) -\varepsilon
\Delta \Psi _{\lambda }\left( J_{\varepsilon }\left( Y_{s}^{\lambda
,\varepsilon }\right) \right) =Y_{s}^{\lambda ,\varepsilon },
\end{equation*}%
we can easily see that
\begin{equation*}
\begin{aligned}
&\left\langle \left( A_{\lambda }\right) ^{\varepsilon }\left(
Y_{s}^{\lambda ,\varepsilon }\right) ,Y_{s}^{\lambda ,\varepsilon
}\right\rangle _{L^{2}\left( \mathcal{O}\right) } \\
=&\left\langle A_{\lambda }\left( J_{\varepsilon }\left( Y_{s}^{\lambda
,\varepsilon }\right) \right) ,J_{\varepsilon }\left( Y_{s}^{\lambda
,\varepsilon }\right) -J_{\varepsilon }\left( Y_{s}^{\lambda ,\varepsilon
}\right) +Y_{s}^{\lambda ,\varepsilon }\right\rangle _{L^{2}\left( \mathcal{O%
}\right) } \\
=&\left\langle A_{\lambda }\left( J_{\varepsilon }\left( Y_{s}^{\lambda
,\varepsilon }\right) \right) ,J_{\varepsilon }\left( Y_{s}^{\lambda
,\varepsilon }\right) \right\rangle _{L^{2}\left( \mathcal{O}\right)
}+\varepsilon \left\Vert Y_{s}^{\lambda ,\varepsilon }-J_{\varepsilon
}\left( Y_{s}^{\lambda ,\varepsilon }\right) \right\Vert _{L^{2}\left(
\mathcal{O}\right) }^{2}\geq 0,
\end{aligned}
\end{equation*}
and keeping in mind the assumption $c),$ we obtain from (\ref{estim1}) that%
\begin{equation}\label{estim2}
\begin{aligned}
&\left\Vert Y_{t}^{\lambda ,\varepsilon }\right\Vert _{L^{2}\left( \mathcal{%
O}\right) }^{2}+\int_{t}^{T}e^{\alpha \left( s-t\right) }\left\langle
A_{\lambda }\left( J_{\varepsilon }\left( Y_{s}^{\lambda ,\varepsilon
}\right) \right) ,J_{\varepsilon }\left( Y_{s}^{\lambda ,\varepsilon
}\right) \right\rangle _{L^{2}\left( \mathcal{O}\right) }ds
\\
\leq &e^{\alpha \left( T-t\right) }\left\Vert \xi \right\Vert _{L^{2}\left(
\mathcal{O}\right) }^{2}+\left( 2K-\alpha \right) \int_{t}^{T}e^{\alpha
\left( s-t\right) }\left\Vert Y_{s}^{\lambda ,\varepsilon }\right\Vert
_{L^{2}\left( \mathcal{O}\right) }^{2}ds  \\
&-2\int_{t}^{T}e^{\alpha \left( s-t\right) }\left\langle Y_{s}^{\lambda
,\varepsilon },Z_{s}^{\lambda ,\varepsilon }dW_{s}\right\rangle _{2}
+\left( 2\gamma -1\right) \int_{t}^{T}e^{\alpha \left( s-t\right)
}\left\Vert Z_{s}^{\lambda ,\varepsilon }\right\Vert _{L_{2}\left(
L^{2}\left( \mathcal{O}\right) ,L^{2}\left( \mathcal{O}\right) \right)
}^{2}ds.
\end{aligned}
\end{equation}
By taking the expectation in the previous relation we obtain for $\alpha >2K$
that
\begin{equation*}
\begin{aligned}
&\mathbb{E}\left\Vert Y_{t}^{\lambda ,\varepsilon }\right\Vert
_{L^{2}\left( \mathcal{O}\right) }^{2}+\mathbb{E}\int_{t}^{T}e^{\alpha
\left( s-t\right) }\left\langle A_{\lambda }\left( J_{\varepsilon }\left(
Y_{s}^{\lambda ,\varepsilon }\right) \right) ,J_{\varepsilon }\left(
Y_{s}^{\lambda ,\varepsilon }\right) \right\rangle _{L^{2}\left( \mathcal{O}%
\right) }ds \\
&+\left( 1-2\gamma \right) \mathbb{E}\int_{t}^{T}e^{\alpha \left(
s-t\right) }\left\Vert Z_{s}^{\lambda ,\varepsilon }\right\Vert
_{L_{2}\left( L^{2}\left( \mathcal{O}\right) ,L^{2}\left( \mathcal{O}\right)
\right) }^{2}ds \\
\leq &\mathbb{E}e^{\alpha \left( T-t\right) }\left\Vert \xi \right\Vert
_{L^{2}\left( \mathcal{O}\right) }^{2}+\left( 2K-\alpha \right) \mathbb{E}%
\int_{t}^{T}e^{\alpha \left( s-t\right) }\left\Vert Y_{s}^{\lambda
,\varepsilon }\right\Vert _{L^{2}\left( \mathcal{O}\right) }^{2}ds,
\end{aligned}
\end{equation*}
which leads to%
\begin{equation}\label{marg-Z}
\begin{aligned}
&\mathbb{E}\left\Vert Y_{t}^{\lambda ,\varepsilon }\right\Vert
_{L^{2}\left( \mathcal{O}\right) }^{2}+\mathbb{E}\int_{t}^{T}e^{\alpha
\left( s-t\right) }\left\langle A_{\lambda }\left( J_{\varepsilon }\left(
Y_{s}^{\lambda ,\varepsilon }\right) \right) ,J_{\varepsilon }\left(
Y_{s}^{\lambda ,\varepsilon }\right) \right\rangle _{L^{2}\left( \mathcal{O}%
\right) }ds  \\
&+\left( 1-2\gamma \right) \mathbb{E}\int_{t}^{T}e^{\alpha \left(
s-t\right) }\left\Vert Z_{s}^{\lambda ,\varepsilon }\right\Vert
_{L_{2}\left( L^{2}\left( \mathcal{O}\right) ,L^{2}\left( \mathcal{O}\right)
\right) }^{2}ds   \\
\leq &C\mathbb{E}e^{\alpha \left( T-t\right) }\left\Vert \xi \right\Vert
_{L^{2}\left( \mathcal{O}\right) }^{2},
\end{aligned}
\end{equation}
for some constant $C$ independent of $\lambda $ and $\varepsilon $. Keep in
mind that $\gamma $ is assumed to be as small as necessary and therefore $%
\left( 1-2\gamma \right) >0.$

Since $\Psi _{\lambda }$ is Lipschitz there exist a constant $\gamma >0$
such that $\dfrac{d}{dr}\Psi _{\lambda }^{-1}\left( r\right) \geq \gamma$, $%
\forall r\in \mathbb{R}$, we have
\begin{equation*}
\begin{aligned}
&\left\langle A_{\lambda }\left( J_{\varepsilon }\left( Y_{s}^{\lambda
,\varepsilon }\right) \right) ,J_{\varepsilon }\left( Y_{s}^{\lambda
,\varepsilon }\right) \right\rangle _{L^{2}\left( \mathcal{O}\right) } \\
=&\left\langle -\Delta \Psi _{\lambda }\left( J_{\varepsilon }\left(
Y_{s}^{\lambda ,\varepsilon }\right) \right) ,\Psi _{\lambda }^{-1}\left(
\Psi _{\lambda }\left( J_{\varepsilon }\left( Y_{s}^{\lambda ,\varepsilon
}\right) \right) \right) \right\rangle _{L^{2}\left( \mathcal{O}\right) } \\
=&\left\langle \nabla \Psi _{\lambda }\left( J_{\varepsilon }\left(
Y_{s}^{\lambda ,\varepsilon }\right) \right) ,\nabla \Psi _{\lambda
}^{-1}\left( \Psi _{\lambda }\left( J_{\varepsilon }\left( Y_{s}^{\lambda
,\varepsilon }\right) \right) \right) \right\rangle _{L^{2}\left( \mathcal{O}%
\right) } \\
\geq &\gamma \left\Vert \nabla \Psi _{\lambda }\left( J_{\varepsilon
}\left( Y_{s}^{\lambda ,\varepsilon }\right) \right) \right\Vert
_{L^{2}\left( \mathcal{O}\right) }^{2} =\left\Vert \left( A_{\lambda }\right) ^{\varepsilon }\left(
Y_{s}^{\lambda ,\varepsilon }\right) \right\Vert _{H^{-1}\left( \mathcal{O}%
\right) }^{2}.
\end{aligned}
\end{equation*}

Going back to (\ref{marg-Z}) we get that
\begin{equation}\label{marg-A}
\begin{aligned}
&\mathbb{E}\left\Vert Y_{t}^{\lambda ,\varepsilon }\right\Vert
_{L^{2}\left( \mathcal{O}\right) }^{2}+\mathbb{E}\int_{t}^{T}e^{\alpha
\left( s-t\right) }\left\Vert \left( A_{\lambda }\right) ^{\varepsilon
}\left( Y_{s}^{\lambda ,\varepsilon }\right) \right\Vert _{H^{-1}\left(
\mathcal{O}\right) }^{2}ds  \\
&+\left( 1-2\gamma \right) \mathbb{E}\int_{t}^{T}e^{\alpha \left(
s-t\right) }\left\Vert Z_{s}^{\lambda ,\varepsilon }\right\Vert
_{L_{2}\left( L^{2}\left( \mathcal{O}\right) ,L^{2}\left( \mathcal{O}\right)
\right) }^{2}ds  \\
\leq &C\mathbb{E}e^{\alpha \left( T-t\right) }\left\Vert \xi \right\Vert
_{L^{2}\left( \mathcal{O}\right) }^{2}.
\end{aligned}
\end{equation}

We go back to (\ref{estim2}) and we take the supremum over time followed by
the expectation and we get%
\begin{equation}\label{estim3}
\begin{aligned}
&\mathbb{E}\left[ \underset{\tau \in \left[ t,T\right] }{\sup }e^{\alpha
\tau }\left\Vert Y_{\tau }^{\lambda ,\varepsilon }\right\Vert _{L^{2}\left(
\mathcal{O}\right) }^{2}\right] +\mathbb{E}\int_{t}^{T}e^{\alpha
s}\left\Vert \left( A_{\lambda }\right) ^{\varepsilon }\left( Y_{s}^{\lambda
,\varepsilon }\right) \right\Vert _{H^{-1}\left( \mathcal{O}\right) }^{2}ds
\\
&+\left( 1-2\gamma \right) \mathbb{E}\int_{t}^{T}e^{\alpha s}\left\Vert
Z_{s}^{\lambda ,\varepsilon }\right\Vert _{L_{2}\left( L^{2}\left( \mathcal{O%
}\right) ,L^{2}\left( \mathcal{O}\right) \right) }^{2}ds \\
\leq &\mathbb{E}e^{\alpha T}\left\Vert \xi \right\Vert _{L^{2}\left(
\mathcal{O}\right) }^{2}+\left( 2K-\alpha \right) \mathbb{E}%
\int_{t}^{T}e^{\alpha s}\left\Vert Y_{s}^{\lambda ,\varepsilon }\right\Vert
_{L^{2}\left( \mathcal{O}\right) }^{2}ds  -2\mathbb{E}\left[ \underset{\tau \in \left[ t,T\right] }{\sup }\int_{\tau
}^{T}e^{\alpha s}\left\langle Y_{s}^{\lambda ,\varepsilon },Z_{s}^{\lambda
,\varepsilon }dW_{s}\right\rangle _{2}\right] .
\end{aligned}
\end{equation}

By the Burkholder-Davis-Gundy inequality we have that%
\begin{equation*}
\begin{aligned}
&\mathbb{E}\left[ \underset{\tau \in \left[ t,T\right] }{\sup }e^{\alpha
\tau }\left\vert \int_{\tau }^{T}\left\langle Y_{s}^{\lambda ,\varepsilon
},Z_{s}^{\lambda ,\varepsilon }dW_{s}\right\rangle _{2}\right\vert \right] \\
\leq &C\mathbb{E}\left[ \int_{t}^{T}e^{\alpha s}\left\Vert Y_{s}^{\lambda
,\varepsilon }\right\Vert _{L^{2}\left( \mathcal{O}\right) }^{2}\left\Vert
Z_{s}^{\lambda ,\varepsilon }\right\Vert _{L_{2}\left( L^{2}\left( \mathcal{O%
}\right) ,L^{2}\left( \mathcal{O}\right) \right) }^{2}ds\right] ^{1/2} \\
\leq &C\left\{ \frac{1}{2C}\mathbb{E}\underset{\tau \in \left[ t,T\right] }{%
\sup }e^{\alpha \tau }\left\Vert Y_{\tau }^{\lambda ,\varepsilon
}\right\Vert _{L^{2}\left( \mathcal{O}\right) }^{2}+\frac{C}{2}\mathbb{E}%
\int_{t}^{T}e^{\alpha s}\left\Vert Z_{s}^{\lambda ,\varepsilon }\right\Vert
_{L_{2}\left( L^{2}\left( \mathcal{O}\right) ,L^{2}\left( \mathcal{O}\right)
\right) }^{2}ds\right\},
\end{aligned}
\end{equation*}
which leads by taking the expectation in both sides of (\ref{estim3}) to%
\begin{equation*}
\begin{aligned}
&\frac{1}{2}\mathbb{E}\left[ \underset{\tau \in \left[ t,T\right] }{\sup }%
e^{\alpha \tau }\left\Vert Y_{\tau }^{\lambda ,\varepsilon }\right\Vert
_{2}^{2}\right] +\left( 1-2\gamma \right) \mathbb{E}\int_{t}^{T}e^{\alpha
s}\left\Vert Z_{s}^{\lambda ,\varepsilon }\right\Vert _{L_{2}\left(
L^{2}\left( \mathcal{O}\right) ,L^{2}\left( \mathcal{O}\right) \right)
}^{2}ds \\
\leq &e^{\alpha T}\mathbb{E}\left\Vert \xi \right\Vert _{2}^{2}+\left(
2K-\alpha \right) \mathbb{E}\int_{t}^{T}e^{\alpha s}\left\Vert
Y_{s}^{\lambda ,\varepsilon }\right\Vert _{L^{2}\left( \mathcal{O}\right)
}^{2}ds+\frac{C^{2}}{2}\mathbb{E}\int_{t}^{T}e^{\alpha s}\left\Vert
Z_{s}^{\lambda ,\varepsilon }\right\Vert _{L_{2}\left( L^{2}\left( \mathcal{O%
}\right) ,L^{2}\left( \mathcal{O}\right) \right) }^{2}ds,
\end{aligned}
\end{equation*}
and by using (\ref{marg-Z}) we have that
\begin{equation*}
\begin{aligned}
\frac{1}{2}\mathbb{E}\left[ \underset{\tau \in \left[ t,T\right] }{\sup }%
e^{\alpha \tau }\left\Vert Y_{\tau }^{\lambda ,\varepsilon }\right\Vert
_{2}^{2}\right]\leq e^{\alpha T}\mathbb{E}\left\Vert \xi \right\Vert _{2}^{2}+C+\left(
2K-\alpha \right) \mathbb{E}\int_{t}^{T}e^{\alpha s}\left\Vert
Y_{s}^{\lambda ,\varepsilon }\right\Vert _{L^{2}\left( \mathcal{O}\right)
}^{2}ds,
\end{aligned}
\end{equation*}
where $C$ is a generic constant which changes from one line to the other
remaining always independent of $\lambda $ and $\varepsilon .$

Finally, since for $\alpha >2K$, we obtain that
\begin{equation}
\frac{1}{2}\mathbb{E}\left[ \underset{\tau \in \left[ t,T\right] }{\sup }%
e^{\alpha \tau }\left\Vert Y_{\tau }^{\lambda ,\varepsilon }\right\Vert
_{2}^{2}\right] \leq Ce^{\alpha T}\mathbb{E}\left\Vert \xi \right\Vert
_{2}^{2}.  \label{marg-Y}
\end{equation}

From (\ref{marg-Z}) and (\ref{marg-Y}) we get that
\begin{equation*}
\begin{aligned}
&\frac{1}{2}\mathbb{E}\left[ \underset{\tau \in \left[ t,T\right] }{\sup }%
e^{\alpha \tau }\left\Vert Y_{\tau }^{\lambda ,\varepsilon }\right\Vert
_{2}^{2}\right] +\mathbb{E}\int_{t}^{T}e^{\alpha s}\left\Vert \left(
A_{\lambda }\right) ^{\varepsilon }\left( Y_{s}^{\lambda ,\varepsilon
}\right) \right\Vert _{H^{-1}\left( \mathcal{O}\right) }^{2}ds \\
&+\left( 1-2\gamma \right) \mathbb{E}\int_{t}^{T}e^{\alpha s}\left\Vert
Z_{s}^{\lambda ,\varepsilon }\right\Vert _{L_{2}\left( L^{2}\left( \mathcal{O%
}\right) ,L^{2}\left( \mathcal{O}\right) \right) }^{2}ds \\
\leq &C\mathbb{E}\left\Vert \xi \right\Vert _{L^{2}\left( \mathcal{O}%
\right) }^{2}.
\end{aligned}
\end{equation*}
\end{proof}

\bigskip

\begin{proof}[Proof of Lemma \protect\ref{lema2}]

Let $\varepsilon ,\ \eta >0$. For each $\lambda $ fixed, we take the It\^{o}
formula with $e^{\alpha t}\left\Vert \cdot \right\Vert _{H^{-1}\left(
\mathcal{O}\right) }^{2}$ for the difference
\begin{equation}\label{dif1}
\begin{aligned}
Y_{t}^{\lambda ,\varepsilon }-Y_{t}^{\lambda ,\eta } =&\int_{t}^{T}\left(
-\left( A_{\lambda }\right) ^{\varepsilon }\left( Y_{s}^{\lambda
,\varepsilon }\right) +h\left( Y_{s}^{\lambda ,\varepsilon },Z_{s}^{\lambda
,\varepsilon }\right) \right) ds  \\
&-\int_{t}^{T}\left( -\left( A_{\lambda }\right) ^{\eta }\left(
Y_{s}^{\lambda ,\eta }\right) +h\left( Y_{s}^{\lambda ,\eta },Z_{s}^{\lambda
,\eta }\right) \right) ds  \\
&-\int_{t}^{T}Z_{s}^{\lambda ,\varepsilon
}dW_{s}+\int_{t}^{T}Z_{s}^{\lambda ,\eta }dW_{s} ,
\end{aligned}
\end{equation}
and we obtain after taking the expectation that
\begin{equation}\label{ito5}
\begin{aligned}
&\mathbb{E}e^{\alpha t}\left\Vert Y_{t}^{\lambda ,\varepsilon
}-Y_{t}^{\lambda ,\eta }\right\Vert _{H^{-1}\left( \mathcal{O}\right)
}^{2}+\alpha \mathbb{E}\int_{t}^{T}e^{\alpha s}\left\Vert Y_{s}^{\lambda
,\varepsilon }-Y_{s}^{\lambda ,\eta }\right\Vert _{H^{-1}\left( \mathcal{O}%
\right) }^{2}ds  \\
=&-2\mathbb{E}\int_{t}^{T}e^{\alpha s}\left\langle \left( A_{\lambda
}\right) ^{\varepsilon }\left( Y_{s}^{\lambda ,\varepsilon }\right) -\left(
A_{\lambda }\right) ^{\eta }\left( Y_{s}^{\lambda ,\eta }\right)
,Y_{s}^{\lambda ,\varepsilon }-Y_{s}^{\lambda ,\eta }\right\rangle
_{H^{-1}\left( \mathcal{O}\right) }ds  \\
&+2\mathbb{E}\int_{t}^{T}e^{\alpha s}\left\langle h\left( Y_{s}^{\lambda
,\varepsilon },Z_{s}^{\lambda ,\varepsilon }\right) -h\left( Y_{s}^{\lambda
,\eta },Z_{s}^{\lambda ,\eta }\right) ,Y_{s}^{\lambda ,\varepsilon
}-Y_{s}^{\lambda ,\eta }\right\rangle _{H^{-1}\left( \mathcal{O}\right) }ds \\
&-\mathbb{E}\int_{t}^{T}e^{\alpha s}\left\Vert Z_{s}^{\lambda ,\varepsilon
}-Z_{s}^{\lambda ,\eta }\right\Vert _{L_{2}\left( L^{2}\left( \mathcal{O}%
\right) ,H^{-1}\left( \mathcal{O}\right) \right) }^{2}ds.
\end{aligned}
\end{equation}
We compute%
\begin{equation*}
\begin{aligned}
&-\int_{t}^{T}e^{\alpha s}\left\langle \left( A_{\lambda }\right)
^{\varepsilon }\left( Y_{s}^{\lambda ,\varepsilon }\right) -\left(
A_{\lambda }\right) ^{\eta }\left( Y_{s}^{\lambda ,\eta }\right)
,Y_{s}^{\lambda ,\varepsilon }-Y_{s}^{\lambda ,\eta }\right\rangle
_{H^{-1}\left( \mathcal{O}\right) }ds \\
=&-\int_{t}^{T}e^{\alpha s}\left\langle \left( A_{\lambda }\right)
^{\varepsilon }\left( Y_{s}^{\lambda ,\varepsilon }\right) -\left(
A_{\lambda }\right) ^{\eta }\left( Y_{s}^{\lambda ,\eta }\right)
,\varepsilon \left( A_{\lambda }\right) ^{\varepsilon }\left( Y_{s}^{\lambda
,\varepsilon }\right) -\eta \left( A_{\lambda }\right) ^{\eta }\left(
Y_{s}^{\lambda ,\eta }\right) \right\rangle _{H^{-1}\left( \mathcal{O}%
\right) }ds \\
&-\int_{t}^{T}e^{\alpha s}\left\langle \left( A_{\lambda }\right)
^{\varepsilon }\left( Y_{s}^{\lambda ,\varepsilon }\right) -\left(
A_{\lambda }\right) ^{\eta }\left( Y_{s}^{\lambda ,\eta }\right)
,J_{\varepsilon }\left( Y_{s}^{\lambda ,\varepsilon }\right) -J_{\eta
}\left( Y_{s}^{\lambda ,\eta }\right) \right\rangle _{H^{-1}\left( \mathcal{O%
}\right) }ds \\
\leq &C\left( \varepsilon +\eta \right) \int_{t}^{T}e^{\alpha s}\left(
\left\Vert \left( A_{\lambda }\right) ^{\varepsilon }\left( Y_{s}^{\lambda
,\varepsilon }\right) \right\Vert _{H^{-1}\left( \mathcal{O}\right)
}^{2}+\left\Vert \left( A_{\lambda }\right) ^{\eta }\left( Y_{s}^{\lambda
,\eta }\right) \right\Vert _{H^{-1}\left( \mathcal{O}\right) }^{2}\right) ds,
\end{aligned}
\end{equation*}
and%

\begin{equation*}
\begin{aligned}
&2\int_{t}^{T}e^{\alpha s}\left\langle h\left( Y_{s}^{\lambda ,\varepsilon
},Z_{s}^{\lambda ,\varepsilon }\right) -h\left( Y_{s}^{\lambda ,\eta
},Z_{s}^{\lambda ,\eta }\right) ,Y_{s}^{\lambda ,\varepsilon
}-Y_{t}^{\lambda ,\eta }\right\rangle _{H^{-1}\left( \mathcal{O}\right) }ds
\\
\leq &2\int_{t}^{T}\rho \left( Y_{s}^{\lambda ,\varepsilon }\right)
e^{\alpha s}\left\Vert Y_{s}^{\lambda ,\varepsilon }-Y_{s}^{\lambda ,\eta
}\right\Vert _{H^{-1}\left( \mathcal{O}\right) }^{2}ds \\
&+2\int_{t}^{T}\rho \left( Y_{s}^{\lambda ,\varepsilon }\right) e^{\alpha
s}\left\Vert Y_{s}^{\lambda ,\varepsilon }-Y_{t}^{\lambda ,\eta }\right\Vert
_{H^{-1}\left( \mathcal{O}\right) }\left\Vert Z_{s}^{\lambda ,\varepsilon
}-Z_{s}^{\lambda ,\eta }\right\Vert _{L_{2}\left( L^{2}\left( \mathcal{O}%
\right) ;H^{-1}\left( \mathcal{O}\right) \right) }ds \\
\leq &2\int_{t}^{T}\rho \left( Y_{s}^{\lambda ,\varepsilon }\right)
e^{\alpha s}\left\Vert Y_{s}^{\lambda ,\varepsilon }-Y_{s}^{\lambda ,\eta
}\right\Vert _{H^{-1}\left( \mathcal{O}\right) }^{2}ds \\
&+2\int_{t}^{T}\rho \left( Y_{s}^{\lambda ,\varepsilon }\right) e^{\alpha
s}\left\Vert Z_{s}^{\lambda ,\varepsilon }-Z_{s}^{\lambda ,\eta }\right\Vert
_{L_{2}\left( L^{2}\left( \mathcal{O}\right) ;H^{-1}\left( \mathcal{O}%
\right) \right) }^{2}ds.
\end{aligned}
\end{equation*}

Going back to (\ref{ito5}) we get that%
\begin{equation*}
\begin{aligned}
&\mathbb{E}\left[ e^{\alpha t}\left\Vert Y_{t}^{\lambda ,\varepsilon
}-Y_{t}^{\lambda ,\eta }\right\Vert _{H^{-1}\left( \mathcal{O}\right) }^{2}%
\right] \\
&+\mathbb{E}\int_{t}^{T}\left( \alpha -2\rho \left( Y_{s}^{\lambda
,\varepsilon }\right) \right) e^{\alpha s}\left\Vert Y_{s}^{\lambda
,\varepsilon }-Y_{s}^{\lambda ,\eta }\right\Vert _{H^{-1}\left( \mathcal{O}%
\right) }^{2}ds \\
&+\mathbb{E}\int_{t}^{T}\left( 1-2\rho \left( Y_{s}^{\lambda ,\varepsilon
}\right) \right) e^{\alpha s}\left\Vert Z_{s}^{\lambda ,\varepsilon
}-Z_{s}^{\lambda ,\eta }\right\Vert _{L_{2}\left( L^{2}\left( \mathcal{O}%
\right) ,H^{-1}\left( \mathcal{O}\right) \right) }^{2}ds \\
\leq &C\left( \varepsilon +\eta \right) \int_{t}^{T}e^{\alpha s}\left(
\left\Vert \left( A_{\lambda }\right) ^{\varepsilon }\left( Y_{s}^{\lambda
,\varepsilon }\right) \right\Vert _{H^{-1}\left( \mathcal{O}\right)
}^{2}+\left\Vert \left( A_{\lambda }\right) ^{\eta }\left( Y_{s}^{\lambda
,\eta }\right) \right\Vert _{H^{-1}\left( \mathcal{O}\right) }^{2}\right) ds.
\end{aligned}
\end{equation*}

Since $\rho $ is bounded and smaller than $1/2$ and $\alpha $ is assumed to
be sufficiently large we get that%
\begin{equation*}
\begin{aligned}
&\mathbb{E}\left[ e^{\alpha t}\left\Vert Y_{t}^{\lambda ,\varepsilon
}-Y_{t}^{\lambda ,\eta }\right\Vert _{H^{-1}\left( \mathcal{O}\right) }^{2}%
\right]+\mathbb{E}\int_{t}^{T}e^{\alpha s}\left\Vert Z_{s}^{\lambda ,\varepsilon
}-Z_{s}^{\lambda ,\eta }\right\Vert _{L_{2}\left( L^{2}\left( \mathcal{O}%
\right) ,H^{-1}\left( \mathcal{O}\right) \right) }^{2}ds \\
\leq &C\left( \varepsilon +\eta \right) .
\end{aligned}
\end{equation*}

We obtained that%
\begin{equation}
Z_{s}^{\lambda ,\varepsilon }\longrightarrow Z_{s}^{\lambda }\text{ strongly
in }L^{2}\left( \left( 0,T\right) \mathcal{\times }\Omega \times L_{2}\left(
L^{2}\left( \mathcal{O}\right) ,H^{-1}\left( \mathcal{O}\right) \right)
\right) .  \label{conv-Z-tare}
\end{equation}\\
By applying again the It\^{o} formula to (\ref{dif1}) with the supremum
before the expectation we get that%
\begin{equation}\label{estim-dif2}
\begin{aligned}
&\mathbb{E}\left[ \underset{\tau \in \left[ t,T\right] }{\sup }e^{\alpha
\tau }\left\Vert Y_{\tau }^{\lambda ,\varepsilon }-Y_{\tau }^{\lambda ,\eta
}\right\Vert _{H^{-1}\left( \mathcal{O}\right) }^{2}\right]+\alpha \mathbb{E}\int_{t}^{T}e^{\alpha s}\left\Vert Y_{s}^{\lambda
,\varepsilon }-Y_{s}^{\lambda ,\eta }\right\Vert _{H^{-1}\left( \mathcal{O}%
\right) }^{2}ds  \\
=&-2\mathbb{E}\int_{t}^{T}e^{\alpha s}\left\langle \left( A_{\lambda
}\right) ^{\varepsilon }\left( Y_{s}^{\lambda ,\varepsilon }\right) -\left(
A_{\lambda }\right) ^{\eta }\left( Y_{s}^{\lambda ,\eta }\right)
,Y_{s}^{\lambda ,\varepsilon }-Y_{s}^{\lambda ,\eta }\right\rangle
_{H^{-1}\left( \mathcal{O}\right) }ds  \\
&+2\mathbb{E}\int_{t}^{T}e^{\alpha s}\left\langle h\left( Y_{s}^{\lambda
,\varepsilon },Z_{s}^{\lambda ,\varepsilon }\right) -h\left( Y_{s}^{\lambda
,\eta },Z_{s}^{\lambda ,\eta }\right) ,Y_{s}^{\lambda ,\varepsilon
}-Y_{s}^{\lambda ,\eta }\right\rangle _{H^{-1}\left( \mathcal{O}\right) }ds\\
&-2\mathbb{E}\left[ \underset{\tau \in \left[ t,T\right] }{\sup }\int_{\tau
}^{T}e^{\alpha s}\left\langle Y_{s}^{\lambda ,\varepsilon }-Y_{s}^{\lambda
,\eta },\left( Z_{s}^{\lambda ,\varepsilon }-Z_{s}^{\lambda ,\eta }\right)
dW_{s}\right\rangle _{H^{-1}\left( \mathcal{O}\right) }\right]  \\
&-\mathbb{E}\int_{t}^{T}e^{\alpha s}\left\Vert Z_{s}^{\lambda ,\varepsilon
}-Z_{s}^{\lambda ,\eta }\right\Vert _{L_{2}\left( L^{2}\left( \mathcal{O}%
\right) ,H^{-1}\left( \mathcal{O}\right) \right) }^{2}ds.
\end{aligned}
\end{equation}

By the Burkholder-Davis-Gundy inequality we have that%
\begin{equation*}
\begin{aligned}
&\mathbb{E}\left[ \underset{\tau \in \left[ t,T\right] }{\sup }e^{\alpha
\tau }\left\vert \int_{\tau }^{T}\left\langle Y_{s}^{\lambda ,\varepsilon
}-Y_{s}^{\lambda ,\eta },\left( Z_{s}^{\lambda ,\varepsilon }-Z_{s}^{\lambda
,\eta }\right) dW_{s}\right\rangle _{H^{-1}\left( \mathcal{O}\right)
}\right\vert \right] \\
\leq &C\mathbb{E}\left[ \int_{t}^{T}e^{\alpha s}\left\Vert Y_{s}^{\lambda
,\varepsilon }-Y_{s}^{\lambda ,\eta }\right\Vert _{H^{-1}\left( \mathcal{O}%
\right) }^{2}\left\Vert Z_{s}^{\lambda ,\varepsilon }-Z_{s}^{\lambda ,\eta
}\right\Vert _{L_{2}\left( L^{2}\left( \mathcal{O}\right) ,H^{-1}\left(
\mathcal{O}\right) \right) }^{2}ds\right] ^{1/2} \\
\leq &C\left\{ \frac{1}{2C}\mathbb{E}\underset{\tau \in \left[ t,T\right] }{%
\sup }e^{\alpha \tau }\left\Vert Y_{s}^{\lambda ,\varepsilon
}-Y_{s}^{\lambda ,\eta }\right\Vert _{H^{-1}\left( \mathcal{O}\right)
}^{2}\right. \\
&\left. +\frac{C}{2}\mathbb{E}\int_{t}^{T}e^{\alpha s}\left\Vert
Z_{s}^{\lambda ,\varepsilon }-Z_{s}^{\lambda ,\eta }\right\Vert
_{L_{2}\left( L^{2}\left( \mathcal{O}\right) ,H^{-1}\left( \mathcal{O}%
\right) \right) }^{2}ds\right\} .
\end{aligned}
\end{equation*}

Going back to (\ref{estim-dif2}) and using again the arguments from above we
get that%
\begin{equation*}
\begin{aligned}
&\frac{1}{2}\mathbb{E}\left[ \underset{\tau \in \left[ t,T\right] }{\sup }%
e^{\alpha \tau }\left\Vert Y_{\tau }^{\lambda ,\varepsilon }-Y_{\tau
}^{\lambda ,\eta }\right\Vert _{H^{-1}\left( \mathcal{O}\right) }^{2}\right]
+\left( \alpha -2\rho \left( Y_{s}^{\lambda ,\varepsilon }\right) \right)
\mathbb{E}\int_{t}^{T}e^{\alpha s}\left\Vert Y_{s}^{\lambda ,\varepsilon
}-Y_{s}^{\lambda ,\eta }\right\Vert _{H^{-1}\left( \mathcal{O}\right) }^{2}ds
\\
\leq &C\left( \varepsilon +\eta \right) \mathbb{E}\int_{t}^{T}e^{\alpha
s}\left( \left\Vert \left( A_{\lambda }\right) ^{\varepsilon }\left(
Y_{s}^{\lambda ,\varepsilon }\right) \right\Vert _{H^{-1}\left( \mathcal{O}%
\right) }^{2}+\left\Vert \left( A_{\lambda }\right) ^{\eta }\left(
Y_{s}^{\lambda ,\eta }\right) \right\Vert _{H^{-1}\left( \mathcal{O}\right)
}^{2}\right) ds \\
&+\frac{C^{2}}{2}\mathbb{E}\int_{t}^{T}e^{\alpha s}\left\Vert
Z_{s}^{\lambda ,\varepsilon }-Z_{s}^{\lambda ,\eta }\right\Vert
_{L_{2}\left( L^{2}\left( \mathcal{O}\right) ,H^{-1}\left( \mathcal{O}%
\right) \right) }^{2}ds.
\end{aligned}
\end{equation*}

Finally, by using (\ref{conv-Z-tare}) we get that%
\begin{equation}
Y_{\tau }^{\lambda ,\varepsilon }\rightarrow Y_{\tau }^{\lambda }\text{
strongly in }L^{2}\left( \Omega ;C\left( \left[ 0,T\right] ;H^{-1}\left(
\mathcal{O}\right) \right) \right) .  \label{conv-YY-tare}
\end{equation}

Since we obtained (\ref{conv-YY-tare}) and (\ref{conv-Z-tare}) the proof of
the Lemma is now complete.
\end{proof}

\bigskip
\bigskip

\begin{proof}[Proof of Lemma \protect\ref{lema3}]

We first note that
\begin{equation*}
\underset{\varepsilon \rightarrow 0}{\lim }J_{\varepsilon }\left( Y^{\lambda
,\varepsilon }\right) =Y^{\lambda }\text{ in }L^{2}\left( 0,T;L^{2}\left(
\Omega ;H^{-1}\left( \mathcal{O}\right) \right) \right),
\end{equation*}%
because%
\begin{equation*}
\begin{aligned}
&\mathbb{E}\int_{t}^{T}\left\Vert J_{\varepsilon }\left( Y^{\lambda
,\varepsilon }\right) -Y^{\lambda }\right\Vert _{-1}ds \\
=&\mathbb{E}\int_{t}^{T}\left\Vert J_{\varepsilon }\left( Y^{\lambda
,\varepsilon }\right) -Y^{\lambda ,\varepsilon }+Y^{\lambda ,\varepsilon
}-Y^{\lambda }\right\Vert _{-1}ds \\
\leq &\mathbb{E}\int_{t}^{T}\left\Vert J_{\varepsilon }\left( Y^{\lambda
,\varepsilon }\right) -Y^{\lambda ,\varepsilon }\right\Vert _{-1}ds+\mathbb{E%
}\int_{t}^{T}\left\Vert Y^{\lambda ,\varepsilon }-Y^{\lambda }\right\Vert
_{-1}ds \\
\leq &\varepsilon \mathbb{E}\int_{t}^{T}\left\Vert \left( A_{\lambda
}\right) ^{\varepsilon }\left( Y_{s}^{\lambda ,\varepsilon }\right)
\right\Vert _{-1}ds+\mathbb{E}\int_{t}^{T}\left\Vert Y^{\lambda ,\varepsilon
}-Y^{\lambda }\right\Vert _{-1}ds\rightarrow 0,\ \varepsilon \rightarrow 0.
\end{aligned}
\end{equation*}

Since $\Psi _{\lambda }$ is Lipschitz in $\mathbb{R}$ we have that $\left\{
\Psi _{\lambda }\left( J_{\varepsilon }\left( Y_{\lambda }^{\varepsilon
}\right) \right) \right\} _{\varepsilon }$ is a Cauchy sequence in $%
L^{2}\left( \left( 0,T\right) \times \Omega \times \mathcal{O}\right) $,
i.e.
\begin{equation*}
\begin{aligned}
&\mathbb{E}\int_{t}^{T}\int_{\mathcal{O}}\left\vert \Psi _{\lambda }\left(
J_{\varepsilon }\left( Y^{\lambda ,\varepsilon }\right) \right) -\Psi
_{\lambda }\left( J_{\eta }\left( Y^{\lambda ,\eta }\right) \right)
\right\vert ^{2}d\xi ds \\
\leq &\frac{1}{\lambda }\mathbb{E}\int_{t}^{T}\int_{\mathcal{O}}\left\vert
\Psi _{\lambda }\left( J_{\varepsilon }\left( Y^{\lambda ,\varepsilon
}\right) \right) -\Psi _{\lambda }\left( J_{\eta }\left( Y^{\lambda ,\eta
}\right) \right) \right\vert \left\vert J_{\varepsilon }\left( Y^{\lambda
,\varepsilon }\right) -J_{\eta }\left( Y^{\lambda ,\eta }\right) \right\vert
d\xi ds \\
\leq &\frac{2}{\lambda }\left[ \mathbb{E}\int_{t}^{T}\left( \left\Vert \Psi
_{\lambda }\left( J_{\varepsilon }\left( Y^{\lambda ,\varepsilon }\right)
\right) \right\Vert _{H_{0}^{1}\left( \mathcal{O}\right) }^{2}+\left\Vert
\Psi _{\lambda }\left( J_{\eta }\left( Y^{\lambda ,\eta }\right) \right)
\right\Vert _{H_{0}^{1}\left( \mathcal{O}\right) }^{2}\right) d\xi ds\right]
^{1/2} \\
&\times \frac{2}{\lambda }\left[
\mathbb{E}\int_{t}^{T}\left\Vert J_{\varepsilon }\left( Y^{\lambda
,\varepsilon }\right) -J_{\eta }\left( Y^{\lambda ,\eta }\right) \right\Vert
_{-1}^{2}d\xi ds\right] ^{1/2} \\
\leq &C\left( \lambda \right) \left[ \mathbb{E}\int_{t}^{T}\left\Vert
J_{\varepsilon }\left( Y^{\lambda ,\varepsilon }\right) -J_{\eta }\left(
Y^{\lambda ,\eta }\right) \right\Vert _{-1}^{2}d\xi ds\right] ^{1/2}\rightarrow 0,\ \varepsilon \rightarrow 0.
\end{aligned}
\end{equation*}

Since%
\begin{equation*}
\underset{\varepsilon \rightarrow 0}{\lim }\mathbb{E}\int_{t}^{T}\int_{%
\mathcal{O}}\left\vert \Psi _{\lambda }\left( J_{\varepsilon }\left(
Y^{\lambda ,\varepsilon }\right) \right) -\Psi _{\lambda }\left( J_{\eta
}\left( Y^{\lambda ,\eta }\right) \right) \right\vert ^{2}d\xi ds=0,
\end{equation*}%
\begin{equation*}
\underset{\varepsilon \rightarrow 0}{\lim }J_{\varepsilon }\left( Y^{\lambda
,\varepsilon }\right) =Y^{\lambda }\text{ weakly in }L^{2}\left( \left(
0,T\right) \times \Omega \times \mathcal{O}\right).
\end{equation*}%
and $\Psi _{\lambda }$ is a maximal monotone operator in $L^{2}\left( \left(
0,T\right) \times \Omega \times \mathcal{O}\right) $ we can conclude that
\begin{equation*}
\Psi _{\lambda }\left( J_{\varepsilon }\left( Y^{\lambda ,\varepsilon
}\right) \right) \underset{\varepsilon \rightarrow 0}{\rightarrow }\Psi
_{\lambda }\left( Y^{\lambda }\right) \text{ strongly in }L^{2}\left( \left(
0,T\right) \times \Omega \times \mathcal{O}\right),
\end{equation*}%
and therefore we can obtain also that%
\begin{equation*}
\left( A_{\lambda }\right) ^{\varepsilon }\left( Y_{s}^{\lambda ,\varepsilon
}\right) \underset{\varepsilon \rightarrow 0}{\rightarrow }A_{\lambda
}\left( Y_{s}^{\lambda }\right) \text{ weakly in }L^{2}\left( \left(
0,T\right) \times \Omega ;H^{-1}\left( \mathcal{O}\right) \right) .
\end{equation*}

Now we obtain that
\begin{equation*}
\mathbb{E}\int_{t}^{T}\left\Vert A_{\lambda }\left( Y_{s}^{\lambda }\right)
\right\Vert _{-1}^{2}ds<C.
\end{equation*}
\end{proof}

\noindent \textbf{Acknowledgements}
D.G. aknowledges financial support from National Sciences and Engineering Research Council (NSERC), Canada,
Grant/Award Number: RGPIN-2025-03963.

\end{document}